\newtheorem{theorem}{Theorem}[section]
\newtheorem{lemma}[theorem]{Lemma}
\theoremstyle{definition}
\newtheorem{definition}[theorem]{Definition}
\newtheorem{question}[theorem]{Question}
\newtheorem*{note*}{Note}
\DeclareMathOperator{\res}{\upharpoonright}
\newcommand{\ran}{\operatorname{ran}}
\newcommand{\seq}[1]{\langle #1 \rangle}
\newcommand{\RT}{\mathsf{RT}}
\newcommand{\COH}{\mathsf{COH}}
\newcommand{\SRT}{\mathsf{SRT}}
\newcommand{\D}{\mathsf{D}}
\newcommand{\cequiv}{\equiv_{\text{\upshape c}}}
\newcommand{\nscred}{\nleq_{\text{\upshape sc}}}
\newcommand{\ured}{\leq_{\text{\upshape W}}}
\newcommand{\nured}{\nleq_{\text{\upshape W}}}
\newcommand{\suequiv}{\equiv_{\text{\upshape sW}}}
\newcommand{\sured}{\leq_{\text{\upshape sW}}}
\newcommand{\Tred}{\leq_{\text{\upshape T}}}
\newcommand{\unlock}{\mathrm{u}}
\newcommand{\name}[1]{\dot{#1}}
\newcommand{\Pforcing}{\mathbb{P}}
\newcommand{\Qforcing}{\mathbb{Q}}
\newcommand{\Rforcing}{\mathbb{R}}
\begin{document}

\title{$\COH$, $\SRT^2_2$, and multiple functionals}

\author{Damir D. Dzhafarov}
\address{Department of Mathematics\\
University of Connecticut\\
341 Mansfield Road\\ Storrs, Connecticut 06269-1009 U.S.A.}
\email{damir@math.uconn.edu}

\author{Ludovic Patey}
\address{Institut Camille Jordan\\
Universit\'{e} Claude Bernard Lyon 1\\
43 boulevard du 11 novembre 1918, F-69622 Villeurbanne Cedex, France}
\email{ludovic.patey@computability.fr}

\thanks{Dzhafarov was supported by a Collaboration Grant for Mathematicians from the Simons Foundation and by a Focused Research Group grant from the National Science Foundation of the United States, DMS-1854355. Patey was partially supported by grant ANR “ACTC” \#ANR-19-CE48-0012-01. The authors thank the anonymous referee for helpful comments.}
\begin{abstract}
	We prove the following result: there is a family $R = \seq{R_0,R_1,\ldots}$ of subsets of $\omega$ such that for every stable coloring $c : [\omega]^2 \to k$ hyperarithmetical in $R$ and every finite collection of Turing functionals, there is an infinite homogeneous set $H$ for $c$ such that none of the finitely many functionals map $R \oplus H$ to an infinite cohesive set for $R$. This provides a partial answer to a question in computable combinatorics, whether $\COH$ is omnisciently computably reducible to $\SRT^2_2$.
\end{abstract}

\maketitle

\section{Introduction}

The \emph{$\SRT^2_2$ vs.\ $\COH$ problem} is a question in computable combinatorics that aims to clarify the relationship between two well-studied combinatorial consequences of Ramsey's theorem for pairs in terms of their effective content. Recently solved in its original form by Monin and Patey \cite{MP-TA2}, it has given rise to several related and more general problems, as we detail further below. In this article, we establish a new partial results towards the resolution of one of the principal outstanding questions in this inquiry.

For completeness, and also to fix some notation, we begin by briefly reviewing the most relevant definitions below. We refer the reader to Hirschfeldt \cite[Chapter 6]{Hirschfeldt-2014} for a more thorough discussion and overview of computable combinatorics. We assume familiarity with computability theory and reverse mathematics, and refer to Soare \cite{Soare-2016} and Simpson \cite{Simpson-2009}, respectively, for background on these subjects. We also assume the basics of Weihrauch reducibility and computable reducibility, and refer, e.g., to Brattka, Gherardi, and Pauly~\cite{BGP-TA} for a detailed survey, or, e.g., to Cholak, Dzhafarov, Hirschfeldt, and Patey~\cite[Section 1]{CDHP-TA} for an introduction aimed more specifically at the kinds of questions we will be dealing with here.

\begin{definition}
	Fix numbers $n,k \geq 1$.
	\begin{enumerate}
		\item For every set $X \subseteq \omega$, let $[X]^n = \{ \seq{x_0,\ldots,x_{n-1}} \in \omega^n : x_0 < \cdots < x_{n-1}\}$.
		\item A \emph{$k$-coloring of $[\omega]^n$} is a map $c : [\omega]^n \to \{0,\ldots,k-1\}$.
		\item A set $H \subseteq \omega$ is \emph{homogeneous} for $c$ if $c \res [H]^n$ is constant.
		\item A \emph{$k$-coloring of $[\omega]^2$} is \emph{stable} if $\lim_y c(\seq{x,y})$ exists for all $x \in \omega$.
		\item A set $L \subseteq \omega$ is \emph{limit-homogeneous} for a stable $c : [\omega]^2 \to k$ if $\lim_y c(x,y)$ is the same for all $x \in L$.
	\end{enumerate}
\end{definition}

\noindent When $n = 2$, we call $c : [\omega]^2 \to k$ a \emph{$k$-coloring of pairs}, or simply a \emph{coloring of pairs} if $k$ is understood. We will write $c(x,y)$ in place of $c(\seq{x,y})$.

The following definition is somewhat nonstandard and technical, but it will simplify the presentation in the sequel.

\begin{definition}\label{D:coh}
Let $R = \seq{r_0,r_1,\ldots}$ be a family of functions $r_i : \omega \to \omega$.
	\begin{enumerate}
		\item $R$ is a \emph{bounded family of functions} if for all $n$ there is a $k$ so that $\ran(r_n) < k$.
		\item For $k \in \omega$, $R$ is a \emph{$k$-bounded family of functions} if $r_n(x) < k$ for all $n$ and~$x$.
		\item A set $X$ is \emph{cohesive} for $R$ if for each $n$ there is a $y \in \omega$ such that $f_n(x) = y$ for all but finitely many $x \in X$.
	\end{enumerate}
\end{definition}

\noindent The more typical definition of cohesiveness is with respect to a family $\seq{R_0,R_1,\ldots}$ of subsets of $\omega$, for which a set $X$ is cohesive if for each $n$, either $X \cap R_n$ or $X \cap \overline{R_n}$ is finite. Of course, if we identify sets with their characteristic functions then we see that this is just the same as being cohesive for a $2$-bounded family of functions. We return to this below.

We follow the now-standard practice of regarding $\Pi^1_2$ statements of second-order arithmetic as \emph{problems}, equipped with a set of instances, and for each instance, a set of solutions, all coded or represented by subsets of $2^\omega$ (see \cite{CDHP-TA}, Definition 1.1). This facilitates their study both in the framework of reverse mathematics and in terms of Weihrauch and computable reducibilities. We shall not be explicit about this identification moving forward, as it is obvious for all of the principles we will be looking at. These are the following.

\begin{definition}\label{D:principles}
\
	\begin{enumerate}
		\item \emph{Ramsey's theorem} is the statement that for all $n,k \geq 1$, every $c : [\omega]^n \to k$ has an infinite homogeneous set.
		\item \emph{Stable Ramsey's theorem for pairs}, denoted $\SRT^2_{<\infty}$, is the restriction of Ramsey's theorem to stable colorings of pairs.
		\item \emph{The $\Delta^0_2$ subset principle}, denoted $\D^2_{<\infty}$, is the statement for all $k \geq 1$, every stable $c : [\omega]^2 \to k$ has an infinite limit-homogeneous set.
		\item \emph{The cohesiveness principle for bounded families}, denoted $\COH_\omega$, is the principle that every bounded family of functions has an infinite cohesive set.
		\item For fixed $n,k \geq 1$, $\RT^n_k$ denotes the restriction of Ramsey's theorem to $k$-colorings of $[\omega]^n$.
		\item For fixed $k \geq 1$, $\SRT^2_k$ and $\D^2_k$ denote the restrictions of $\SRT^2_{<\infty}$ and $\D^2_{<\infty}$, respectively, to $k$-colorings.
		\item For fixed $k \geq 1$, $\COH_k$ is the restriction of $\COH_\omega$ to $k$-bounded families of functions.
	\end{enumerate}
\end{definition}

\noindent For $n = 2$, the traditional notation for $\COH_2$ is $\COH$, and we shall follow this below. However, we can really use the various restrictions of $\COH_\omega$ defined above interchangeably, as the following lemma shows.

\begin{lemma}\label{L:cohequiv}
	For all $k \geq 2$, we have $\COH \suequiv \COH_k \suequiv \COH_\omega$.
\end{lemma}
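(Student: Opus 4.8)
The plan is to prove the chain of strong Weihrauch equivalences $\COH \suequiv \COH_k \suequiv \COH_\omega$ by establishing the two nontrivial reductions $\COH_\omega \sured \COH$ and noting that the intermediate reductions $\COH \sured \COH_k \sured \COH_\omega$ are immediate from the inclusions of instance classes ($2$-bounded families are $k$-bounded, and $k$-bounded families are bounded), since in each case an instance of the smaller principle is literally an instance of the larger one and a solution to it is literally a solution to the original. So the entire content is the single backward reduction from $\COH_\omega$ to $\COH$.

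For the reduction $\COH_\omega \sured \COH$, I would first handle the case $\COH_k \sured \COH$ for fixed $k$, and then bootstrap to $\COH_\omega$. Given a $k$-bounded family $R = \seq{r_0, r_1, \ldots}$, the idea is to replace each $k$-valued function $r_n$ by the finitely many $2$-valued functions $s_{n,j}$ for $j < k$ defined by $s_{n,j}(x) = 1$ iff $r_n(x) = j$ (equivalently, the sets $R_{n,j} = r_n^{-1}(\{j\})$). Arrange these into a single sequence $S = \seq{s_{\langle n,j\rangle} : n \in \omega,\, j < k}$ via a computable pairing, so that $S$ is a $2$-bounded family (an instance of $\COH$) computable uniformly from $R$. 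Any set $X$ cohesive for $S$ has, for each $n$ and each $j < k$, a limiting value $\lim_{x \in X} s_{n,j}(x) \in \{0,1\}$; since the $r_n$-values on $X$ eventually stabilize per coordinate $j$, one checks that $\lim_{x \in X} r_n(x)$ exists — indeed for each $n$ there is exactly one $j$ with limiting value $1$ (once $X$ is infinite and cohesive), so $X$ is cohesive for $R$. Thus $X$ itself, with no post-processing, witnesses the solution, giving a strong reduction with trivial forward and backward functionals.

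To get from $\COH_k$ to $\COH_\omega$, given a bounded family $R$ we would like to do the same trick, but now the bound varies with $n$: there is a function $b$ with $\ran(r_n) < b(n)$, and $b$ need not be computable from $R$. The fix is to not use $b$ at all: define $s_{\langle n, j \rangle}(x) = 1$ iff $r_n(x) = j$, for all $n$ and all $j \in \omega$; for each fixed $n$ only finitely many of these are not identically $0$, but that does not matter, since an identically-$0$ function is harmless for cohesiveness (its limit along any infinite set is $0$). The sequence $S = \seq{s_{\langle n,j\rangle} : n, j \in \omega}$ is again $2$-bounded and uniformly computable from $R$, and any infinite $X$ cohesive for $S$ is cohesive for $R$ by the same argument as before: for each $n$, there is a least $j < b(n)$ realized infinitely often on $X$, and cohesiveness of $X$ for the relevant $s_{\langle n,j' \rangle}$ forces $r_n$ to be eventually constant on $X$. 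I expect the only point requiring care — and the "main obstacle," though it is minor — is verifying that this argument truly needs no knowledge of the bounds $b(n)$ and that the resulting reduction is strong Weihrauch rather than merely Weihrauch or computable; this comes down to observing that the output functional is the identity on the solution and makes no use of the instance, and that $S$ depends only on $R$ (not on any oracle). Chaining the three strong reductions and the two trivial inclusions gives $\COH \suequiv \COH_k \suequiv \COH_\omega$ for all $k \geq 2$.
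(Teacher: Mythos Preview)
Your argument is correct, and in fact it is somewhat cleaner than the paper's. Both proofs dispose of the forward direction identically and reduce the content to $\COH_\omega \sured \COH$ with the identity as the backward functional. The difference is in how the $2$-bounded family is manufactured from a given bounded family $R = \seq{r_0,r_1,\ldots}$. You use the ``one indicator per value'' encoding $s_{\langle n,j\rangle}(x) = [r_n(x) = j]$ for all $j \in \omega$, which is uniformly $R$-computable without any knowledge of the bounds and for which cohesiveness transfers back to $R$ by the pigeonhole argument you sketch. The paper instead encodes $r_n(x)$ by its binary digits, using roughly $\lceil \log_2 b(n)\rceil$ many $2$-valued functions per $n$; since the least bound $b(n)$ is only $R'$-computable, the paper passes through an $R$-computable limit approximation $\widehat{b}(n,s)$ and indexes the resulting $2$-valued functions by tuples involving $\widehat{b}(n,s)$. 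Your approach trades a larger (but still countable) index set for the elimination of the $\Delta^0_2$ approximation machinery entirely, which makes the verification that the reduction is strong Weihrauch completely transparent. Either route proves the lemma; yours is more elementary.
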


\begin{proof}
	Obviously, $\COH \sured \COH_k \sured \COH_\omega$. It remains only to show that $\COH_\omega \sured \COH$. For all $k, y \in \omega$, let $y_k$ be $y$ written in binary, either truncated or prepended by $0s$ to have exactly $\ulcorner \log_2 k \urcorner$ many digits. We view $y_k$ as a string, and write $y_k(i)$ for its $i$th digit. Now fix a bounded family of functions $R = \seq{r_0,r_1,\ldots}$. Let $b : \omega \to \omega$ be the function $b(n) = (\mu k)(\forall x)[r_n(x) < k]$ for all $n \in \omega$. Then $b$ is uniformly $R'$-computable. So we can fix a uniformly $R$-computable approximation $\widehat{b} : \omega^2 \to \omega$ to $b$, so that $\lim_s \widehat{b}(n,s) = b(n)$ for all $n$. Define a $2$-bounded family of functions $R' = \seq{r'_0,r'_1,\ldots}$ as follows: for all $m,x \in \omega$,
	\[
		r'_m(x) =
		\begin{cases}
			r_n(x)_{\widehat{b}(n,s)}(i) & \text{if } (\exists n,s \in \omega)(\exists i < \ulcorner \log_2 \widehat{b}(n,s) \urcorner)~m = \seq{\widehat{b}(n,s),i} \\
			0 & \text{otherwise.}
		\end{cases}
	\]
	Then $R'$ is a uniformly $R$-computable, and it is not difficult to see that every infinite cohesive set for $R'$ is also cohesive for $R$. This completes the proof.
\end{proof}

\noindent A well-known fact about $\COH$ (in the parlance of Definitions \ref{D:coh} and \ref{D:principles}) is that if $X$ computes an infinite cohesive set for some $2$-bounded family of functions $R = \seq{r_0,r_1,\ldots}$, then so does any set $Y$ satisfying $R \Tred Y$ and $X' \Tred Y'$. By the preceding lemma, we see that the same holds for any bounded family of functions.

The relationship between the stable Ramsey's theorem and the cohesiveness principle is the focus of a longstanding and ongoing investigation (see, e.g.,~\cite{CDHP-TA, DGHPP-TA, BR-2017, DDHMS-2016, Dzhafarov-2016, DPSW-2017, HJ-2016, HM-TA, MP-TA, Patey-2016c, Patey-2016}). We refer the reader to~\cite[Section 1]{CDHP-TA} for a discussion of some of the history of these principles, and their larger significance in the exploration of the logical strength of combinatorial principles. For many years, the central open question in this investigation was the so-called \emph{$\SRT^2_2$ vs.\ $\COH$ problem}, which asked whether every $\omega$-model of $\SRT^2_2$ also satisfy $\COH$. The answer was recently shown to be no.

\begin{theorem}[Monin and Patey \cite{MP-TA2}]\label{thm:cohsrt22}
	There exists an $\omega$-model of $\SRT^2_2$ in which $\COH$ fails.	
\end{theorem}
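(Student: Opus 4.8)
The plan is to build the desired $\omega$-model $\M$ as an increasing union $\M = \bigcup_n \mathcal{I}_n$ of countable Turing ideals, each singly generated, say $\mathcal{I}_n = \{X : X \Tred Z_n\}$ with $Z_0 = \emptyset$ and $Z_{n+1} = Z_n \oplus L_n$ for carefully chosen $L_n$. First I would fix the instance of $\COH$ that is to fail in $\M$: let $R = \seq{R_0,R_1,\ldots}$ enumerate all primitive recursive subsets of $\omega$, so that $R$ is uniformly computable, hence a member of every Turing ideal, and a set is cohesive for $R$ exactly when it is $p$-cohesive. Second, since an $\omega$-model of $\D^2_2$ is automatically an $\omega$-model of $\SRT^2_2$ --- given a limit-homogeneous set $L$ for a stable coloring $c$, an infinite homogeneous subset of $L$ is computable from $L \oplus c$ by thinning --- it suffices to build an $\omega$-model of $\D^2_2$; the gain is that a $\D^2_2$-solution to a stable $c$ is merely an infinite subset of one of the two pieces $A_0,A_1$, where $A_i = \{x : \lim_y c(x,y) = i\}$, which is more tractable to construct by forcing. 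Throughout the construction I would preserve the invariant that $Z_n$ computes no $p$-cohesive set, so that no member of $\mathcal{I}_n$ does either, while a dovetailing bookkeeping guarantees that for each $n$ and each index $e$ the coloring $\Phi_e^{Z_n}$ is eventually processed and, when it is a total stable coloring, assigned a solution.

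The form of this invariant is dictated by the relationship between $\COH$ and the Turing jump recorded just before the statement: since $R$ is computable, that fact says that whether a set computes a $p$-cohesive set depends only on the Turing degree of its jump and is upward closed in the jump ordering, so ``computes no $p$-cohesive set'' is genuinely a property of the jump. Because $p$-cohesive sets can be found of $\mathrm{low}_2$ degree, the cruder invariant ``not of high degree'' would not keep $p$-cohesive sets out of the model; one must control membership in the class of oracles computing a $p$-cohesive set at the level of the \emph{double} jump. The device for making this manageable is to recast ``computes no $p$-cohesive set'' as an avoidance condition for a suitable \emph{largeness class} --- one containing the class of $p$-cohesive sets --- turning the invariant into something amenable to forcing. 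Arranging that $\COH$ for $R$ then fails in $\M$ as soon as every generator satisfies the avoidance condition is routine, so the entire weight of the argument is thrown onto showing that the condition is preserved by $\D^2_2$.

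The heart of the proof, and the step I expect to be by far the hardest, is therefore the preservation lemma: \emph{if $Z$ computes no $p$-cohesive set and $c \Tred Z$ is a stable coloring, then $c$ has an infinite limit-homogeneous set $L$ such that $Z \oplus L$ computes no $p$-cohesive set}. I would prove it by forcing with Mathias conditions $(\sigma, X)$ relative to $Z$, where the stem $\sigma$ is a finite subset of one of the pieces $A_0, A_1$ and the reservoir $X$ is infinite and chosen from a class designed so that passing to a generic $G$ cannot inject the forbidden largeness-class membership into $Z \oplus G$. For each Turing functional $\Gamma$ one must meet the requirement that $\Gamma^{Z \oplus G}$ is not a $p$-cohesive set; the difficulty is that a Mathias generic has an uncontrolled reservoir, so a priori $\Gamma^{Z \oplus G}$ could be total and cohesive for every primitive recursive set. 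One defeats this by analysing the $\Sigma^0_2$-level forcing question for the notion of forcing and showing that no condition can force $\Gamma^{Z \oplus G}$ to be $R$-cohesive --- here one uses a largeness/genericity property of $R$, essentially that an $R$-cohesive set is too unconstrained across the primitive recursive sets to be pinned down by finite information --- together with a disjunction argument over the two pieces $A_0, A_1$, in the spirit of the pigeonhole constructions of Seetapun and Liu, showing that at least one of them always admits the required diagonalisation. Making all of this precise demands a pigeonhole-style basis theorem controlled at the double-jump level, which is the genuine novelty and where essentially all the work lies.

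With the preservation lemma in hand, the rest is routine. Starting from $Z_0 = \emptyset$, at each stage apply the lemma to the current generator and to the coloring next in line (which is computed from that generator since the generators increase), producing a limit-homogeneous $L_n$ with $Z_n \oplus L_n$ still computing no $p$-cohesive set, and set $Z_{n+1} = Z_n \oplus L_n$; the invariant then holds at every stage. Put $\M = \bigcup_n \mathcal{I}_n$. Then $\M$ is a countable Turing ideal, hence an $\omega$-model of $\RCA_0$; every stable coloring in $\M$ is computed by some $Z_n$ and so is processed by the bookkeeping, hence has an infinite limit-homogeneous set, and hence an infinite homogeneous set, in $\M$, so $\M \models \SRT^2_2$; and no member of $\M$ computes a $p$-cohesive set, so the instance $R$ of $\COH$, which is computable and hence present in $\M$, has no solution in $\M$, so $\COH$ fails in $\M$. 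This yields the theorem.
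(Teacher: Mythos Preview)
The paper does not prove this theorem; it is stated with attribution to Monin and Patey \cite{MP-TA2} and cited without proof. The paper's own contribution is the distinct Theorem~\ref{T:main} on hyperarithmetic reducibility with finitely many functionals, so there is no ``paper's own proof'' to compare against here.

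Your outline correctly identifies the architecture one would expect: build the model as an increasing union of principal Turing ideals, reduce to $\D^2_2$, preserve the invariant ``computes no $p$-cohesive set,'' and throw all the weight onto a preservation lemma for that invariant under adjoining a limit-homogeneous set. But you have not proved the preservation lemma; you have only named it and then written that ``making all of this precise demands a pigeonhole-style basis theorem controlled at the double-jump level, which is the genuine novelty and where essentially all the work lies.'' That sentence is accurate, and it is also precisely the gap. Everything else in your outline --- the iterative ideal construction, the bookkeeping, the reduction from $\SRT^2_2$ to $\D^2_2$, the observation that $p$-cohesiveness is a jump-degree property --- is standard scaffolding that was well understood long before the theorem was proved. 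What was missing for years, and what Monin and Patey supplied, is exactly the combinatorial mechanism at the $\Sigma^0_2$ level that lets one diagonalize against every putative cohesive-set computation while remaining inside one of the limit-colors. Your description of that step (``analysing the $\Sigma^0_2$-level forcing question \ldots\ together with a disjunction argument over the two pieces $A_0,A_1$, in the spirit of the pigeonhole constructions of Seetapun and Liu'') states what the argument must accomplish but gives no mechanism; in particular, the Seetapun and Liu techniques you invoke operate at the first-jump level and do not by themselves give the second-jump control you correctly identify as necessary. Without a concrete device there, the proposal is a plan rather than a proof.
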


\noindent The quest to obtain this solution, by multiple authors, gave rise many related questions, many of which hint more deeply at the combinatorial nature of the relationship between $\SRT^2_2$ and $\COH$, and which remain open. Our focus in this paper will be on a question that has arguably emerged as the most central among these. We first recall the definition of omniscient reducibility, introduced by Monin and Patey \cite[Section 1.1]{MP-TA}.

\begin{definition}
	Let $\mathsf{P}$ and $\mathsf{Q}$ be problems. 
	\begin{enumerate}
		\item $\mathsf{P}$ is \emph{omnisciently computably reducible to} $\mathsf{Q}$ if for every $\mathsf{P}$-instance $X$ there is a $\mathsf{Q}$-instance $\widehat{X}$ with the property that if $\widehat{Y}$ is any $\mathsf{Q}$-solution to $\widehat{X}$ then $X \oplus \widehat{Y}$ computes a $\mathsf{P}$-solution to $X$.
		\item $\mathsf{P}$ is \emph{omnisciently Weihrauch reducible to} $\mathsf{Q}$ if there is a Turing functional $\Psi$ such that for every $\mathsf{P}$-instance $X$ there is a $\mathsf{Q}$-instance $\widehat{X}$ with the property that if $\widehat{Y}$ is any $\mathsf{Q}$-solution to $\widehat{X}$ then $\Psi(X \oplus \widehat{Y})$ is a $\mathsf{P}$-solution to $X$.
	\end{enumerate}
	The reductions above are \emph{strong} if the relevant computation of a $\mathsf{P}$-solution to $X$ works with just $\widehat{Y}$ as an oracle, rather than $X \oplus \widehat{Y}$.
\end{definition}



\begin{question}\label{Q:omni}
	Is $\COH$ omnisciently computably reducible to $\D^2_2$, or to $\D^2_{<\infty}$?
\end{question}

We can at first compare this to the question of whether $\COH$ is simply computably reducible to $\D^2_{<\infty}$ (or $\D^2_k$ for some $k$). Here, the answer is no. Indeed, it is easy to see that $\SRT^2_{<\infty} \cequiv \D^2_{<\infty}$, and that for each specific $k$, also $\SRT^2_k \cequiv \D^2_k$. Thus, in the computable reducibility question we could replace $\D^2_{<\infty}$ by $\SRT^2_{<\infty}$ (or $\D^2_k$ by $\SRT^2_k$), and then the answer follows by Theorem \ref{thm:cohsrt22} since computable reducibility implies implication over $\omega$-models. (Alternatively, it is easy to see that over $\omega$-models, $\D^2_k$, $\D^2_{<\infty}$, $\SRT^2_k$, and $\SRT^2_{<\infty}$ are equivalent, for all $k \geq 2$.)

Omniscient computable reducibility is more sensitive. While Question \ref{Q:omni} is open, if we replace $\D^2_2$ by $\SRT^2_2$ there then the answer is known: $\COH$ \emph{is} omnisciently computably reducible even to $\SRT^2_2$ (see \cite{CDHP-TA}, Proposition 2.2). On the other hand, we can replace $\D^2_2$ by $\RT^1_2$, as these are easily seen to be omnisciently computably equivalent, and similarly for $\D^2_{<\infty}$ and $\RT^1_{<\infty}$. Here, it will be easier to work with $\D^2_k$ and $\D^2_{<\infty}$, so the rest of our discussion is formulated in terms of these principles.

For completeness, we note also that Dzhafarov \cite[Theorem 3.2 and Corollary 3.5]{Dzhafarov-2016} showed that $\SRT^2_2$ is not omnisciently Weihrauch, or strongly omnisciently computably, reducible to $\D^2_{<\infty}$, while Patey \cite[Corollary 3.3]{Patey-2016} showed that for all $k > \ell \geq 1$, $\D^2_k$ is not strongly omnisciently computably reducible to $\SRT^2_\ell$. Thus, the relationships between different versions of the stable Ramsey's theorem and the $\Delta^0_2$ subset principle in terms of known reducibilities are fully understood.

As described in \cite[Sections 1 and 2]{CDHP-TA}, Question~\ref{Q:omni} seems to encompass the true combinatorial core of the relationship between cohesiveness and homogeneity. Adapting the techniques from Monin and Patey's resolution of the $\SRT^2_2$ vs.\ $\COH$ problem, or the techniques from earlier, partial solutions by Dzhafarov \cite{Dzhafarov-2016} and Dzhafarov, Patey, Solomon, and Westrick \cite{DPSW-2017} (who established that $\COH \nured \SRT^2_{<\infty}$ and $\COH \nscred \SRT^2_{<\infty}$, respectively) has so far proved difficult. There is thus a wide gap between the current results and Question \ref{Q:omni}. Our approach here is to narrow this gap by allowing for multiple functionals in the ``backward'' direction. For succinctness, we introduce the following definition:

\begin{definition}
	Let $\mathsf{P}$ and $\mathsf{Q}$ be problems.
	\begin{enumerate}
		\item $\mathsf{P}$ is \emph{Weihrauch reducible to $\mathsf{Q}$ with finitely many functionals} if there is a Turing functional $\Phi$ such that for every $\mathsf{P}$-instance $X$ there is a finite set of Turing functionals $\Psi_0,\ldots,\Psi_{t-1}$ such that $\Phi(X)$ is a $\mathsf{Q}$-instance and if $\widehat{Y}$ is any $\mathsf{Q}$-solution to $\Phi(X)$ then there is a $t < s$ with $\Psi_t(X \oplus \widehat{Y})$ a $\mathsf{P}$-solution to~$X$.
		\item $\mathsf{P}$ is \emph{computably reducible to $\mathsf{Q}$ with finitely many functionals} if for every $\mathsf{P}$-instance $X$ there is a $\mathsf{Q}$-instance $\widehat{X} \Tred X$ and a finite set of Turing functionals $\Psi_0,\ldots,\Psi_{t-1}$ such that if $\widehat{Y}$ is any $\mathsf{Q}$-solution to $\widehat{X}$ then there is a $t < s$ with $\Psi_t(X \oplus \widehat{Y})$ a $\mathsf{P}$-solution to $X$.
		\item $\mathsf{P}$ is \emph{hyperarithmetically computably reducible to $\mathsf{Q}$ with finitely many functionals} if for every $\mathsf{P}$-instance $X$ there is a $\mathsf{Q}$-instance $\widehat{X}$ hyperarithmetical in $X$ and a finite set of Turing functionals $\Psi_0,\ldots,\Psi_{t-1}$ such that if $\widehat{Y}$ is any $\mathsf{Q}$-solution to $\widehat{X}$ then there is a $t < s$ with $\Psi_t(X \oplus \widehat{Y})$ a $\mathsf{P}$-solution to $X$.
		\item $\mathsf{P}$ is \emph{omnisciently computably reducible to $\mathsf{Q}$ with finitely many functionals} if for every $\mathsf{P}$-instance $X$ there is a $\mathsf{Q}$-instance $\widehat{X}$ and a finite set of Turing functionals $\Psi_0,\ldots,\Psi_{t-1}$ such that if $\widehat{Y}$ is any $\mathsf{Q}$-solution to $\widehat{X}$ then there is a $t < s$ with $\Psi_t(X \oplus \widehat{Y})$ a $\mathsf{P}$-solution to $X$.
	\end{enumerate}
\end{definition}

\noindent The basic relationships between the above reducibilities are as follows:
\[
\begin{array}{lll}
	\mathsf{P} \ured \mathsf{Q} & \implies & \mathsf{P} \text{ is Weihrauch reducible to } \mathsf{Q} \text{ with finitely many functionals}\\\\\
	& \implies & \mathsf{P} \text{ is computably reducible to } \mathsf{Q} \text{ with finitely many functionals}\\\\
	& \implies & \mathsf{P} \text{ is hyperarithmetically reducible to } \mathsf{Q} \text{ with finitely many}\\
	& & \text{functionals}\\\\
	& \implies & \mathsf{P} \text{ is omnisciently computably reducible to } \mathsf{Q} \text{ with finitely mamy}\\
	& & \text{functionals}\\\\
	& \implies & \mathsf{P} \text{ is omnisciently computably reducible to } \mathsf{Q}.
\end{array}
\]
\noindent Note also that while Weihrauch reducibility with finitely many functionals is a generalization of Weihrauch reducibility, computable reducibility with finitely many functionals is a restriction of computable reducibility.  A good example here is to look at $\SRT^2_2$ and $\D^2_2$: as mentioned, $\SRT^2_2 \nured \D^2_2$, but it is easy to see that $\SRT^2_2$ is Weihrauch reducible to $\D^2_2$ with finitely many (in fact, two) functionals. We can now state our main result:

\begin{theorem}\label{T:main}
	$\COH$ is not hyperarithmetically computably reducible to $\D^2_{<\infty}$
	with finitely many functionals.
\end{theorem}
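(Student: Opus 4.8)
The plan is to prove the theorem by constructing a single $\COH$-instance with the property described in the abstract. Concretely, we build a family $R = \seq{R_0,R_1,\ldots}$ of subsets of $\omega$ (equivalently, a $2$-bounded family of functions, via characteristic functions) so that for every stable coloring $c \colon [\omega]^2 \to k$ hyperarithmetical in $R$ and every finite tuple of Turing functionals $\Psi_0,\ldots,\Psi_{t-1}$, there is an infinite limit-homogeneous set $L$ for $c$ such that $\Psi_i(R \oplus L)$ is not an infinite cohesive set for $R$ for any $i < t$. Since an infinite limit-homogeneous set for $c$ is a $\D^2_{<\infty}$-solution to $c$, and since every $\D^2_{<\infty}$-instance hyperarithmetical in $R$ is such a $c$, this $R$ witnesses the failure of the reduction. (With a little more care in the construction of $L$ one can make it homogeneous rather than merely limit-homogeneous, which additionally gives that $\COH$ is not hyperarithmetically computably reducible to $\SRT^2_{<\infty}$ with finitely many functionals, as stated in the abstract.) A first simplification: if $c$ is hyperarithmetical in $R$ and $d(x) = \lim_y c(x,y)$, then $d$ is hyperarithmetical in $R$ as well, and by the pigeonhole principle some color class $d^{-1}(j)$ is infinite; the infinite limit-homogeneous sets for $c$ are exactly the infinite subsets of the infinite color classes. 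So it suffices, given such a $d$, to choose one infinite class $d^{-1}(j)$ and produce an infinite $L \subseteq d^{-1}(j)$ with $\Psi_i(R \oplus L)$ not an infinite cohesive set for $R$ for all $i < t$.

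Next, $R$ is obtained by forcing. The conditions are finite approximations to the columns $R_n$ together with a ``largeness'' side condition constraining how the remaining bits of the already-touched columns may be completed, in the spirit of the forcings used by Dzhafarov~\cite{Dzhafarov-2016}, Dzhafarov, Patey, Solomon and Westrick~\cite{DPSW-2017}, and Monin and Patey~\cite{MP-TA, MP-TA2}. Meeting sufficiently many dense sets guarantees, first, that each $R_n$ and its complement is infinite, so that $R$ is a genuine $\COH$-instance, and second, the preservation property that drives the argument. Because the coloring $c$ is itself hyperarithmetical in the object being built, the relevant dense sets must be indexed not by actual colorings but by pairs $(\Gamma, \seq{\Psi_0,\ldots,\Psi_{t-1}})$, where $\Gamma$ is a candidate functional, at a level $\alpha$ of the hyperarithmetic hierarchy, that applied to $R^{(\alpha)}$ would produce a stable coloring. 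For each such pair the dense set forces that, once $R$ is completed, the corresponding Mathias-type forcing that builds $L$ below the appropriate color class admits, in finitely many steps, a condition forcing $\Psi_i(R \oplus L)$ not to be an infinite cohesive set for $R$ for every $i < t$.

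The heart of the proof is the combinatorial lemma underlying that last step: given a partial condition on $R$, a Mathias condition $(F,X)$ for $L$ with $X$ an infinite (hyperarithmetical-in-$R$) subset of the chosen color class, and a single functional $\Psi$, one can extend $(F,X)$ -- and possibly the condition on $R$ -- so as to force either that $\Psi(R \oplus L)$ is not total or is finite, or that $\Psi(R \oplus L)$ meets both $R_n$ and $\overline{R_n}$ infinitely for some $n$. The second alternative is where the genericity of $R$ is used: the still-unspecified bits in the large reservoir of the $R$-condition give us the freedom to ``split'' some column across the set $\Psi(R \oplus L)$, which along the relevant branch is growing without bound, so that $\Psi(R \oplus L)$ cannot be cohesive for $R$. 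One then iterates this over $\Psi_0,\ldots,\Psi_{t-1}$, at each step shrinking the Mathias reservoir and the $R$-condition's reservoir by a controlled amount; after $t$ steps one has a single condition forcing all $t$ requirements, and $L$ is the corresponding generic. The restriction to \emph{finitely} many functionals is essential here: finitely many nested extensions leave a legitimate condition with infinite reservoir, whereas an infinite sequence of them need not, which is precisely the obstruction to settling Question~\ref{Q:omni}.

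The main obstacle, and where the bulk of the work lies, is reconciling the construction of $R$ with that of $L$: at the time we are deciding bits of $R$, the coloring $c$ -- and hence the color class $d^{-1}(j)$ inside which $L$ must live -- is not yet determined, so the forcing must be arranged so that a single condition simultaneously pins down enough of $R$ and certifies the existence of the later $L$-forcing, uniformly over all candidate coloring-functionals. Managing the largeness bookkeeping so that handling one $\Psi_i$ does not destroy the room needed for the next, and so that the completed $R$ is still a non-degenerate $\COH$-instance, is the delicate point. A second source of difficulty is the hyperarithmetic strength: since $c$ may be as complex as $R^{(\alpha)}$ for any $\alpha < \omega_1^{\mathrm{CK},R}$, the genericity must be propagated through the entire hyperarithmetic hierarchy relative to $R$, which is what pushes the argument beyond the arithmetic partial results of~\cite{Dzhafarov-2016, DPSW-2017} and forces us to adopt the hyperarithmetic machinery of Monin and Patey.
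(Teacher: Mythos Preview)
Your overall framework---force the $\COH$-instance $R$, and for each hyperarithmetical stable coloring and each finite list of functionals run a Mathias-style construction of a limit-homogeneous set that spoils all the functionals, exploiting unspecified bits of some column $R_n$---matches the paper. But two of your ``simplifications'' are exactly where the argument would break.

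First, you propose to fix a single infinite color class $d^{-1}(j)$ and take the Mathias reservoir $X$ to be a hyperarithmetical-in-$R$ subset of it. The paper does \emph{not} do this: it carries Mathias data $(D_0,\ldots,D_{k-1},\name{I})$ for all $k$ colors simultaneously, with a common reservoir $\name{I}$ that is forced to be merely \emph{low} over $\name{G}$ and is \emph{not} contained in any color class. This matters for two reasons. (i) In the interesting case $p_\Gamma$ forces that no infinite limit-homogeneous set is low over $\name{G}$, so an infinite $X\subseteq d^{-1}(j)$ cannot be low over $R$; once the reservoir is not low, the key ``progress'' statement is no longer $\Sigma^0_2(\name{G})$, and the lock-preservation step (the paper's Lemma~\ref{L:lockspres}) no longer applies. (ii) The progress lemma (Lemma~\ref{L:progress}) does not let you choose which color makes progress: given one functional per color, it produces $i<k$ and $x$ with $\Psi_{t_i}(\name{G}\oplus E_i)(x)\downarrow=1$ and $\name{G}_n(x)=j$, but $i$ is dictated by which half of a low partition of $\name{I}^*$ happens to be infinite. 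Your single-side version of the combinatorial lemma---force either finiteness or a split for a chosen $\Psi$ on a chosen color---does not follow from the paper's argument, and I do not see how to prove it.

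Second, because progress falls on an uncontrolled color $i$, the paper needs a pigeonhole at the end: the column $G_{n_0}$ takes $k+1$ values (this is why the instance is $3$-bounded for $k=2$, and $(k{+}1)$-bounded in general), so among the $k$ colors some $L_i$ is forced to hit two distinct values of $G_{n_0}$ infinitely often. Your plan, with a $2$-bounded $R$ and a single $L$, has no room for this step. Relatedly, the functionals are not handled one at a time by shrinking reservoirs; rather one first passes to a maximal ``captured'' set $M\subseteq k\times s$ (Lemma~\ref{L:Mexists}), and then every later progress step treats a full tuple $\seq{\seq{0,t_0},\ldots,\seq{k-1,t_{k-1}}}$ outside $M$ together. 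The finiteness of $s$ is used to make $M$ exist, not to bound a chain of reservoir shrinkings.
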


\noindent  That is, we build a family of sets $G = \seq{G_0,G_1,\ldots}$ such that for every stable coloring hyperarithmetical in $G$ and every finite collection of Turing functionals $\Psi_0,\ldots,\Psi_{s-1}$, there exists an infinite limit-homogeneous set $H$ for $c$ such that $\Psi_t(G \oplus H)$ is not an infinite cohesive set for $G$, for any $t < s$.

Our construction will force the instance $G$ of $\COH$ to be non-hyperarithmetical. We do not know whether, in general, this is necessary, or whether there exists a witness to Theorem \ref{T:main} that is hyperarithmetical, or perhaps arithemtical or even computable. Indeed, it is even possible that there is a computable instance of $\COH$ witnessing a negative answer to Question \ref{Q:omni}.

The rest of this paper is dedicated to a proof of Theorem \ref{T:main}. For ease of understanding, we organize this into two parts. In Section \ref{S:construction} we present a proof just for the case of stable $2$-colorings. Then, in Section \ref{S:extensions}, we explain how the argument can be adapted to obtain the theorem in its full generality.

\begin{note*}
	The proof of Monin and Patey \cite{MP-TA2} of Theorem \ref{thm:cohsrt22} was announced after the original submission of this article. We have rewritten the introduction here to reflect this fact.
\end{note*}

\section{Construction}\label{S:construction}

Our approach uses an elaboration on the forcing methods introduced by Dzhafarov \cite{Dzhafarov-2015} for building instances of $\COH$, and by Cholak, Jockusch, and Slaman \cite[Section 4]{CJS-2001} for building solutions to $\D^2_2$. With respect to the latter, our proof here has a crucial innovation. As in other applications, we force with Mathias conditions, defined below. But here, our reservoirs are not computable or low, or indeed absolute sets of any other kind. Rather, they are names for sets in the forcing language we use to build our $\COH$ instance. This allows us to control not just the $\COH$ instance and the $\D^2_2$ solution separately, as is done, e.g., in \cite{Dzhafarov-2015} or \cite{DPSW-2017}, but also to control their join. We refer the reader to Shore \cite[Chapter 3]{Shore-2016} and Sacks \cite[Section IV.3]{Sacks-1990} for background on forcing in arithmetic, and the latter specifically for an introduction to forcing over the hyperarithmetic hierarchy.

In what follows, several notions of forcing are defined. When no confusion can arise, we refer to the conditions and extension relation in each of these simply as ``conditions'' and ``extension'', without explicitly labeling these by the forcing itself.

\subsection{Generic instances of $\COH$}

\begin{definition}
	Let $\Pforcing$ be the notion of forcing whose conditions are tuples $p = (\sigma_0,\ldots,\sigma_{|p|-1},f)$ as follows:
	\begin{itemize}
		\item $|p| \in \omega$;
		\item $\sigma_n \in 3^{<\omega}$ for each $n < |p|$;
		\item $f$ is a function $|p| \to 3 \cup \{\unlock\}$.
	\end{itemize}
	A condition $q = (\tau_0,\ldots,\tau_{|q|-1},g)$ extends $p$, written $q \leq p$, if:
	\begin{itemize}
		\item $|p| \leq |q|$;
		\item $f \preceq g$;
		\item $\sigma_n \preceq \tau_n$ for all $n < |p|$;
		\item if $f(n) \neq \unlock$ for some $n < |p|$ then $\tau_n(x) = f(n)$ for all $x \in [|\sigma_n|,|\tau_n|)$.
	\end{itemize}
\end{definition}

Given a $\Pforcing$-condition $p = (\sigma_0,\ldots,\sigma_{|p|-1},f)$, we also write $\sigma_n^p$ and $f^p$ for $\sigma_n$ and $f$, respectively. If $\mathcal{G}$ is a sufficiently generic filter on $\Pforcing$ then we can define
\[
	G^{\mathcal{G}}_n = \bigcup_{p \in \mathcal{G}, |p| > n} \sigma^p_n
\]
and $G^{\mathcal{G}} = \bigoplus_{n \in \omega} G^\mathcal{G}_n$. Note that this is an instance of $\COH_3$, and that by genericity, there are infinitely many $n$ such that $\lim_x G^{\mathcal{G}}_n(x)$ exists, and infinitely many $n$ such that $\lim_x G^{\mathcal{G}}_n(x)$ does not exist.

The $\Pforcing$ forcing language and forcing relation are defined inductively as usual, and we use $\name{G}_n$ and $\name{G}$ as names for $G^{\mathcal{G}}_n$ and $G^{\mathcal{G}}$. More generally, we help ourselves to names (or \emph{$\mathbb{P}$-names}) for all definable sets in the forcing language and use these as parameters in other definitions.

\begin{lemma}\label{L:lockspres}
	Let $\varphi(\name{G})$ be a $\Sigma^0_2(\name{G})$ formula in the forcing language that is forced by some condition $p$. Let $q$ be the condition that is the same as $p$, only there is some $n < |p|$ such that $f^p(n) = \unlock$ and $f^q(n) \neq \unlock$. Then $q$ forces $\varphi(\name{G})$.
\end{lemma}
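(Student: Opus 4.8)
The plan is to proceed by induction on the complexity of the $\Sigma^0_2(\name{G})$ formula $\varphi$, but the heart of the matter is the following monotonicity observation about the forcing: changing the value $f^p(n)$ from $\unlock$ to an actual color $c \in 3$ only \emph{restricts} which conditions can appear in a generic filter, hence only \emph{restricts} the possible generic objects $G^{\mathcal{G}}$. More precisely, I would first verify the combinatorial claim that every condition below $q$ is also a condition below $p$; this is immediate from the definition of $\Pforcing$-extension, since the only clause that differs is the ``locking'' clause for the index $n$, and $q$ imposes that clause where $p$ does not. Consequently any filter generic through $q$ is a filter generic through $p$, and the sets $G^{\mathcal{G}}_m$ it produces satisfy exactly the same constraints except that $G^{\mathcal{G}}_n$ is now eventually constant with value $f^q(n)$.

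Next I would set up the induction. For atomic and negated-atomic formulas about $\name{G}$ (and more generally for $\Sigma^0_0(\name{G})$ formulas), forcing is decided by finite initial segments of the $\sigma_m$'s, which $p$ and $q$ share; so $p \forces \psi$ iff $q \forces \psi$ for such $\psi$, and in particular $q$ forces everything $p$ forces at this level. The key asymmetric step is the $\Sigma^0_1$ case: suppose $p \forces \exists x\, \theta(x,\name{G})$ with $\theta$ bounded. Here I cannot argue condition-by-condition directly, because the witness is produced by extensions of $p$. Instead I use the density characterization: $p \forces \exists x\,\theta(x,\name{G})$ means the set of conditions forcing some instance $\theta(k,\name{G})$ is dense below $p$. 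I then need that it is dense below $q$ as well. Given $r \leq q$, I have $r \leq p$, so there is $r' \leq r$ forcing $\theta(k,\name{G})$ for some $k$; but $r' \leq r \leq q$ retains the locking at $n$ (extension preserves a non-$\unlock$ value), so $r'$ is a condition below $q$, and since forcing of bounded formulas is preserved downward, $r'$ still forces $\theta(k,\name{G})$. Hence density below $q$ holds and $q \forces \exists x\,\theta(x,\name{G})$. The $\Pi^0_1$ case is handled dually but is actually easier: $p \forces \forall x\, \theta(x, \name G)$ iff no extension of $p$ forces $\neg\theta(k,\name G)$ for any $k$; since extensions of $q$ are among extensions of $p$, this passes to $q$ immediately. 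Finally, a $\Sigma^0_2(\name{G})$ formula has the form $\exists x\, \psi(x,\name G)$ with $\psi$ a $\Pi^0_1(\name G)$ formula; I combine the $\Sigma^0_1$ density argument (to locate the outer witness) with the $\Pi^0_1$ step (applied to $\psi(k,\name G)$ at the witnessing condition), exactly as above.

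The main obstacle, and the only place requiring care, is the existential/density step: one must resist the temptation to argue ``$q$ forces what $p$ forces'' pointwise at the level of a single condition, since a condition does not in general decide a $\Sigma^0_1$ statement. The correct tool is that the class of extensions of $q$ is a \emph{subclass} of the extensions of $p$ that is nonetheless \emph{cofinal in the relevant sense}, because from any $r \leq q$ one can find the desired stronger condition $r' \leq p$ and then observe $r'$ automatically lies below $q$ (the locking constraint, once imposed, is inherited by all further extensions). Once this is phrased correctly the induction closes routinely, and one does not need to track the effect on $G^{\mathcal{G}}_n$ explicitly — it suffices that the generic object through $q$ is \emph{a} generic object through $p$, so any $\Sigma^0_2(\name G)$ fact forced by $p$ holds of it.
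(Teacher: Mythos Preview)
Your proposal rests on a claim that is false in $\Pforcing$: it is \emph{not} true that every $r \leq q$ satisfies $r \leq p$. The extension relation requires $f^p \preceq f^r$ (i.e., $f^r \res |p| = f^p$), but any $r \leq q$ has $f^r(n) = f^q(n) \neq \unlock = f^p(n)$. So in fact $p$ and $q$ are \emph{incompatible} in $\Pforcing$: no condition extends both. Consequently your density argument for the existential step cannot get started (given $r \leq q$ you cannot invoke ``$r \leq p$''), and your semantic remark that ``a generic filter through $q$ is a generic filter through $p$'' is likewise false --- such a filter contains $q$, which is incompatible with $p$.

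The paper's proof runs in the opposite direction and uses the one feature of $\Pforcing$ you did not exploit: the lock function $f$ is invisible to the forcing of bounded formulas. Reducing (via strong forcing) to the $\Pi^0_1$ case $\neg(\exists x)\psi(\name{G},x)$, suppose $q$ fails to force it; take $r \leq q$ and $a$ with $r \forces \psi(\name{G},a)$. Now \emph{unlock} $r$ at $n$: let $r'$ agree with $r$ except $f^{r'}(n) = \unlock$. Then $r' \leq p$ (the only obstruction, at coordinate $n$, has been removed), and since whether a bounded sentence is forced depends only on the strings $\sigma^r_0,\ldots,\sigma^r_{|r|-1}$, we still have $r' \forces \psi(\name{G},a)$, contradicting $p \forces \neg(\exists x)\psi(\name{G},x)$. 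The moral: you cannot carry conditions from below $q$ to below $p$ as-is; you must modify them by unlocking, and the reason this is harmless is precisely that $f$ plays no role in deciding bounded formulas.
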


\begin{proof}
	As we are employing strong forcing, it suffices 	to consider the case that $\varphi(\name{G})$ is $\Pi^0_1(\name{G})$. Thus, $\varphi(\name{G})$ can be put in the form $\neg (\exists x)\psi(\name{G},x)$, where $\psi$ has only bounded quantifiers and has no free variables other than $x$. If $q$ does not force this formula then by definition there is some $r \leq q$ and some $a \in \omega$ such that $r$ forces $\psi(\name{G},a)$. Now, as $\Psi(\name{G},a)$ has no free variables, it can be put in quantifier-free conjunctive normal form. But the fact that each clause in this conjunction is forced by $r$ depends only on the strings $\sigma^r_0,\ldots,\sigma^r_{|r|-1}$. So let $r'$ be the condition that is the same as $r$, except that $f^{r'}(n) = f^p(n) = \unlock$. Then $r'$ still forces $\Psi(\name{G},a)$, and hence also $(\exists x)\psi(\name{G},x)$. But $r'$ is an extension of $p$, and hence witnesses that $p$ could not force $\neg (\exists x)\psi(\name{G},x)$ or $\varphi(\name{G})$, a contradiction.
\end{proof}

\begin{lemma}\label{L:nolow}
	If $\mathcal{G}$ is a sufficiently generic filter on $\Pforcing$ then there is no infinite cohesive set for $G^{\mathcal{G}}$ which is low over $G^{\mathcal{G}}$.	
\end{lemma}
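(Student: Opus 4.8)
The plan is to derive the lemma from the well‑known fact recalled just above it, reducing to the assertion that a sufficiently generic $G^{\mathcal{G}}$ computes no infinite cohesive set for itself. Indeed, suppose $C$ were an infinite cohesive set for $G^{\mathcal{G}}$ that is low over $G^{\mathcal{G}}$, i.e.\ $(C \oplus G^{\mathcal{G}})' \Tred (G^{\mathcal{G}})'$. Apply that fact with $R = G^{\mathcal{G}}$ (viewed as a $3$‑bounded, hence bounded, family of functions), $X = C \oplus G^{\mathcal{G}}$, and $Y = G^{\mathcal{G}}$: then $R \Tred Y$ and $X' \Tred Y'$, and $X$ computes an infinite cohesive set for $R$, so $Y = G^{\mathcal{G}}$ itself computes an infinite cohesive set for $G^{\mathcal{G}}$. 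It thus suffices to show that for each Turing functional $\Phi$ the set of conditions forcing ``$\Phi^{\name{G}}$ is not an infinite cohesive set for $\name{G}$'' is dense in $\Pforcing$.

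So fix $\Phi$ and a condition $p$. If some $q \leq p$ forces $\Phi^{\name{G}}$ to be partial or to be finite, then $q$ already forces the requirement and we are done. Otherwise, for every $q \leq p$ and every $\ell \in \omega$ there is an extension of $q$ forcing some $x > \ell$ into $\Phi^{\name{G}}$ (the ``enumerate arbitrarily large elements'' property). Reserve an unlocked column $n$ for $\Phi$ — appending a fresh one to $p$ if necessary and keeping $f(n) = \unlock$ in all further extensions. The goal is now to produce, for each $\ell$, an extension of $p$ forcing two elements $x_0, x_1 > \ell$ into $\Phi^{\name{G}}$ with $\name{G}_n(x_0) = 0$ and $\name{G}_n(x_1) = 1$; meeting these requirements for all $\ell$ forces $\Phi^{\name{G}}$ not to be cohesive for $\name{G}_n$, hence not for $\name{G}$.

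To force an element $x > \ell$ into $\Phi^{\name{G}}$ with a prescribed value $v \in \{0,1\}$ for $\name{G}_n(x)$: using the enumeration property with lower bound $\max(\ell, |\sigma^p_n|)$, obtain $r \leq p$ and $x > \max(\ell,|\sigma^p_n|)$ with $r$ forcing $x \in \Phi^{\name{G}}$. Since this is a $\Sigma^0_1(\name{G})$ fact forced by $r$, the underlying computation has finite use and reads $\sigma^r_n$ only up to some length $u$. As column $n$ is unlocked in $r$, we may truncate $\sigma^r_n$ to length $\max(u, |\sigma^p_n|)$; the resulting condition $r'$ still extends $p$ and still forces $x \in \Phi^{\name{G}}$, and if $u \leq x$ we may freely re‑extend $\sigma^{r'}_n$ past $x$, setting $\sigma_n(x) = v$, to obtain the desired condition.

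The delicate point — and the step I expect to be the main obstacle — is the case $u > x$, in which membership of $x$ in $\Phi^{\name{G}}$ genuinely reads $\sigma_n(x)$ and cannot be redirected. Here one argues by a dichotomy: either for every $\ell$ there is an extension forcing some $x > \ell$ into $\Phi^{\name{G}}$ whose column‑$n$ use is at most $x$ (in which case the previous paragraph lets us choose the color of $x$ at will), or else beyond some $\ell$ every extension forcing $x \in \Phi^{\name{G}}$ also decides $\name{G}_n(x)$; and then, by an argument of the same flavor as the dichotomy that opened the density proof, the current condition already forces $\Phi^{\name{G}}$ to be cohesive for $\name{G}_n$ with a fixed limit. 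This can happen only when the membership computations already pin $\name{G}_n$ down at every sufficiently large element — i.e.\ when $\Phi^{\name{G}}$ is forced to lie, up to a finite set, inside $\{x : \name{G}_n(x) = 0\}$ or inside $\{x : \name{G}_n(x) = 1\}$ — in which case column $n$ cannot witness non‑cohesiveness, and we restart the argument with a fresh unlocked column in place of $n$ (each functional having an infinite reserve of such columns). The crux is then to show that this restarting terminates — equivalently, that $\Phi^{\name{G}}$ cannot be forced cohesive for all of these reserved unlocked columns at once unless it is forced finite — together with the bookkeeping needed to maintain the reserves. Everything else is a routine density argument over $\Pforcing$, using only that $\Sigma^0_1(\name{G})$‑facts have finite use and that unlocked columns may be extended arbitrarily.
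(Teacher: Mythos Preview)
Your reduction to ``$G^{\mathcal{G}}$ computes no infinite cohesive set for itself'' is exactly the paper's first step, and your plan to show density of conditions forcing $\Phi^{\name{G}}$ non-cohesive is right. But your density argument has a genuine gap that you yourself identify: the restarting procedure has no termination proof, and the dichotomy you sketch in the ``delicate point'' paragraph is not an argument but a restatement of what would need to be shown. There is no reason a priori that $\Phi^{\name{G}}$ cannot be forced cohesive for every fresh unlocked column you throw at it while still being infinite, so the whole use-manipulation/truncate/re-extend machinery, followed by falling back to a new column, may simply loop forever.

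The paper avoids all of this with a single clean move that you are missing: the \emph{locking} mechanism built into $\Pforcing$. Take one fresh unlocked column $n$ above $|p|$. Either the resulting condition already forces that for every $i<3$ there are arbitrarily large $x$ with $\Phi^{\name{G}}(x)\downarrow=1$ and $\name{G}_n(x)=i$ (so $\Phi^{\name{G}}$ is not cohesive for column $n$), or else some extension $r$ and some color $i$ have the property that no further extension ever forces such an $x$. In the second case, pass to the condition $s$ identical to $r$ except that $f^s(n)=i$: now column $n$ is locked to color $i$, so \emph{every} large $x$ has $\name{G}_n(x)=i$, and hence $\Phi^{\name{G}}(x)\simeq 0$ for all large $x$, i.e.\ $\Phi^{\name{G}}$ is finite. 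One fresh column, one dichotomy, no use-chasing, no restarts. Your approach never exploits the $f(n)\neq\unlock$ clause of the $\Pforcing$-extension relation, which is precisely the tool designed for this lemma.
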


\begin{proof}
	By the remark following Lemma \ref{L:cohequiv}, it suffices to show that $G^{\mathcal{G}}$ has no $G^{\mathcal{G}}$-computable infinite cohesive set. Fix any functional $\Delta$, and any condition $p$. We exhibit an extension of $p$ forcing that $\Delta(\name{G})$ is not an infinite cohesive set for $\name{G}$. This density fact and the genericity of $\mathcal{G}$ will yield the lemma. Let $n = |p|$. Let $q$ be any extension of $p$ with $|q| = n + 1$ and $f^{q}(n) = \unlock$. If $q$ forces that for each $i < 3$ and each $z \in \omega$ there is an $x > z$ such that $\Delta(\name{G})(x) \downarrow = 1$ and $\name{G}_n(x) = i$, then we can take $q$ to be the desired extension. So suppose otherwise. Then there is an $i < 3$, a $z \in \omega$, and an $r \leq q$ such that no extension of $r$ forces that there is an $x > z$ with $\Delta(\name{G})(x) \downarrow = 1$ and $\name{G}_n(x) = i$. In this case, let $s$ be the condition that is the same as $r$, except that $f^s(n) = i$. Then $s \leq p$ and forces that for all $x > \max\{x,|\sigma^s_n|\}$ we have $\Delta(\name{G})(x) \simeq 0$.
\end{proof}

In the next section, we assemble the pieces to diagonalize all stable colorings hyperarithmetical in our generic instance $G^\mathcal{G}$ of $\COH$. We formualte the pieces for a fixed hyperarithmetical operator $\Gamma$, and then apply them across all such operators in the final proof. This forces our filter $\mathcal{G}$ to be hyperarithmetically generic, and hence, as remarked earlier, $G^\mathcal{G}$ to be non-hyperarithmetical.
\subsection{Generic limit-homogeneous sets}\label{S:Qforcing}

Throughout this section, let $\Gamma$ be a fixed hyperarithmetical operator, and let $\Psi_0,\ldots,\Psi_{s-1}$ be fixed Turing functionals. Let $p_\Gamma$ be a fixed $\Pforcing$-condition forcing that $\Gamma(\name{G})$ is a stable coloring $[\omega]^2 \to 2$ with no infinite limit-homogeneous set which is low over $\dot{G}$. For each $i < 2$ we let $\name{A}_i$ be a name for the set $\{x \in \omega: \lim_y \Gamma(\name{G})(x,y) = i\}$.

\begin{definition}
	Let $\Qforcing_{p_\Gamma}$ be the notion of forcing whose conditions are tuples $(p,D_0,D_1,\name{I})$ as follows:
	\begin{itemize}
		\item $p$ is a $\Pforcing$-condition extending $p_\Gamma$;
		\item $D_i$ is a finite set for each $i < 2$, and $p$ forces that $D_i \subseteq \name{A}_i$;
		\item $\name{I}$ is a $\Pforcing$-name, and $p$ forces that $\name{I}$ is an infinite set which is low over $\dot{G}$, and $\max D_0 \cup D_1 < \min \name{I}$.	
	\end{itemize}
	A condition $(q,E_0,E_1,\name{J})$ extends $(p,D_0,D_1,\name{I})$ if:
	\begin{itemize}
		\item $q \leq p$;
		\item $D_i \subseteq E_i$ for each $i < 2$;
		\item $q$ forces that $E_i \smallsetminus D_i \subseteq \name{I}$ for each $i < 2$, and that $\name{J} \subseteq \name{I}$.
	\end{itemize}
\end{definition}

\noindent Thus, we can think of $\mathbb{Q}_{p_\Gamma}$-condition as $p$, together with a pair of Mathias conditions, $(D_0,\name{I})$ and $(D_1,\name{I})$, that share a common reservoir.

For the remainder of this section, let $\Psi_0,\ldots,\Psi_{s-1}$ be a fixed collection of Turing functionals.

\begin{lemma}\label{L:Mexists}
	The collection of $\Pforcing$-conditions $p^*$ with the following property is dense below $p_\Gamma$: there exists a $\mathbb{Q}_{p_\Gamma}$-condition $(p^*,D_0^*,D_1^*,\name{I})*$ and a maximal subset $M$ of $2 \times s$ such that for all $\seq{i,t} \in M$, $p^*$ forces that there is a $z \in \omega$ such that $\Psi_t(\name{G} \oplus (D_i^* \cup F))(x) \simeq 0$ for all finite sets $F \subseteq \name{I}^*$ and all $x > z$.
\end{lemma}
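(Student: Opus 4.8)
The plan is to prove the density statement directly: fix a $\Pforcing$-condition $p \leq p_\Gamma$, and produce a $p^* \leq p$ with the stated property. Two soft observations will drive the argument. First, below any $\Pforcing$-condition $p'$ there is a $\Qforcing_{p_\Gamma}$-condition with $\Pforcing$-part $p'$: take $(p', \emptyset, \emptyset, \name{\omega})$, where $\name{\omega}$ is a $\Pforcing$-name for $\omega$. Every requirement in the definition of $\Qforcing_{p_\Gamma}$ is then vacuous except that $p'$ force $\name\omega$ to be infinite and low over $\name{G}$, which holds since $\omega$ is computable. Second, and more importantly, the property in question \emph{persists under $\Qforcing_{p_\Gamma}$-extension}: if $(q,E_0,E_1,\name{J})$ is a $\Qforcing_{p_\Gamma}$-condition such that $q$ forces that there is a $z$ with $\Psi_t(\name{G}\oplus(E_i\cup F))(x)\simeq 0$ for all finite $F\subseteq\name{J}$ and all $x>z$, then every $\Qforcing_{p_\Gamma}$-extension $(q',E'_0,E'_1,\name{J}')$ has $q'$ forcing the analogous statement for $E'_i$ and $\name{J}'$, with the same $z$. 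The reason is that $q'\le q$ forces $E'_i\smallsetminus E_i\subseteq\name{J}$ and $\name{J}'\subseteq\name{J}$, so any finite $F'\subseteq\name{J}'$ can be absorbed: $E'_i\cup F' = E_i\cup F$ with $F=(E'_i\smallsetminus E_i)\cup F'$ a finite subset of $\name{J}$. (One verifies this directly against the definition of $\Vdash$ for the arithmetic statement at hand; no computation of its complexity is needed.)

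Granting these, let $\mathcal{F}$ be the set of pairs $(M,\mathfrak{q})$ where $M\subseteq 2\times s$, $\mathfrak{q}=(p',D'_0,D'_1,\name{I}')$ is a $\Qforcing_{p_\Gamma}$-condition with $p'\le p$, and $p'$ forces, for each $\seq{i,t}\in M$, that there is a $z$ with $\Psi_t(\name{G}\oplus(D'_i\cup F))(x)\simeq 0$ for all finite $F\subseteq\name{I}'$ and all $x>z$. By the first observation $(\emptyset,(p,\emptyset,\emptyset,\name{\omega}))\in\mathcal{F}$, so $\mathcal{F}\ne\emptyset$; since $2\times s$ is finite, I would fix $(M,\mathfrak{q})\in\mathcal{F}$ with $|M|$ maximum and write $\mathfrak{q}=(p^*,D^*_0,D^*_1,\name{I}^*)$. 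Then $p^*\le p\le p_\Gamma$ and $p^*$ witnesses the property for every pair in $M$. And $M$ is maximal in the relevant sense: if some $\seq{i,t}\notin M$ admitted a $\Qforcing_{p_\Gamma}$-extension $\mathfrak{q}'\le\mathfrak{q}$ whose $\Pforcing$-part forces the analogous statement for $\seq{i,t}$, then by persistence $\mathfrak{q}'$ would force it for every pair in $M$ as well, whence $(M\cup\{\seq{i,t}\},\mathfrak{q}')\in\mathcal{F}$, contradicting the maximality of $|M|$. Thus $p^*$ is as required, and such $p^*$ are dense below $p_\Gamma$.

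The argument is essentially bookkeeping, and the one step I expect to be the actual obstacle is the persistence observation: one must get the absorption $E'_i\cup F'=E_i\cup F$ to land $F$ precisely inside the \emph{finite subsets of the name $\name{J}$} — exactly the reservoir over which the original statement quantifies, rather than some other, absolutely-defined reservoir — and one must check that this combinatorial identity really does upgrade to a statement about $\Vdash$ (not merely about generic extensions through $q'$). I would also note, for completeness, that the hypothesis that $p_\Gamma$ forces $\Gamma(\name{G})$ to have no infinite limit-homogeneous set low over $\name{G}$ plays no role in this particular lemma; it will be exploited only in the later lemmas that deal with the pairs \emph{outside} $M$. Finally, since the lemma asserts only a density fact, it is harmless that the choices made above (e.g.\ deciding whether a given pair admits a forcing extension) are non-effective.
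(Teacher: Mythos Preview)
Your argument is correct and essentially the same as the paper's, which runs a greedy induction through the pairs of $2\times s$ (starting from $(p,\emptyset,\emptyset,\name{\omega})$, at each step extending the $\Qforcing_{p_\Gamma}$-condition if the current pair can be accommodated, and otherwise leaving it fixed), whereas you pick a $(M,\mathfrak q)\in\mathcal F$ of maximum $|M|$ directly. Both arguments hinge on the same persistence observation---that the forced statement survives $\Qforcing_{p_\Gamma}$-extension via the absorption $E'_i\cup F'=E_i\cup((E'_i\smallsetminus E_i)\cup F')$---which the paper leaves implicit under ``Clearly'' and you spell out; your remark that the low-limit-homogeneous hypothesis on $p_\Gamma$ is unused here is also accurate.
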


\begin{proof}
	Let $p \leq p_\Gamma$ be given. We exhibit a $p^*$ as above below $p$. Fix an enumeration of all pairs $\seq{i,t} \in 2 \times s$. Define $M_0 = \emptyset$, and and let $(p^0,D^0_0,D^0_1,\name{I}^0)$ be the $\Qforcing_{p_\Gamma}$-condition $(p,\emptyset,\emptyset,\name{\omega})$. By induction, suppose that we have defined $M_k \subseteq 2 \times s$ for some $k < 2s$, along with some $\Qforcing_{p_\Gamma}$-condition $(p^k,D^k_0,D^k_1,\name{I}^k)$. Let $\seq{i,t}$ be the $(k+1)$-st element of our enumeration of $2 \times s$. If there is a condition $(q,E_0,E_1,\name{J})$ extending $(p^k,D^k_0,D^k_1,\name{I}^k)$ such that $q$ forces there is a $z \in \omega$ such that $\Psi_t(\name{G} \oplus (E_i \cup F))(x) \simeq 0$ for all finite sets $F \subseteq \name{J}$ and all $x > z$, let $M_{k+1} = M_k \cup \{\seq{i,t}\}$ and let $(p^{k+1},D^{k+1}_0,D^{k+1}_1,\name{I}^{k+1})$ be such a $(q,E_0,E_1,\name{J})$. Otherwise, let $M_{k+1} = M_k$ and let $(p^{k+1},D^{k+1}_0,D^{k+1}_1,\name{I}^{k+1}) = (p^k,D^k_0,D^k_1,\name{I}^k)$. Clearly, $M = M_{2s}$ and $(p^*,D_0^*,D_1^*,\name{I}^*) = (p^{2s},D^{2s}_0,D^{2s}_1,\name{I}^{2s})$ satisfy the conclusion of the lemma.
\end{proof}

For the duration of this section, let $(p^*,D_0^*,D_1^*,\name{I}^*)$ and $M$ as above be fixed.

\begin{definition}\label{D:Rforcing}
	Let $\Rforcing_{p^*,D_0^*,D_1^*,\name{I}^*}$ be the restriction of $\Qforcing_{p_\Gamma}$ to conditions extending $(p^*,D_0^*,D_1^*,\name{I}^*)$ of the form $(p,D_0,D_1,\name{I}^* \cap [u,\infty))$.
\end{definition}

\noindent To visually distinguish $\Rforcing_{p^*,D_0^*,D_1^*,\name{I}^*}$-conditions from more general $\Qforcing_{p_\Gamma}$-extensions of $(p^*,D_0^*,D_1^*,\name{I}^*)$, we denote the $\Rforcing_{p^*,D_0^*,D_1^*,\name{I}^*}$-condition $(p,D_0,D_1,\name{I}^* \cap [u,\infty))$ by $(p,D_0,D_1,u)$. Note that $(p^*,D_0^*,D_1^*,\name{I}^*)$ is of course an $\Rforcing_{p^*,D_0^*,D_1^*,\name{I}^*}$-condition.

We now assemble a couple of density facts that we will use to prove our theorem.

\begin{lemma}\label{L:infsides}
	Let $(p,D_0,D_1,u)$ be an $\Rforcing_{p^*,D_0^*,D_1^*,\name{I}^*}$-condition. The collection of $\Pforcing$-condition $q$ for which there exists an $\Rforcing_{p^*,D_0^*,D_1^*,\name{I}^*}$-condition $(q,E_0,E_1,v)$ extending $(p,D_0,D_1,u)$, and satisfying $|E_i| = |D_i| + 1$ for each $i < 2$, is dense below $p$.
\end{lemma}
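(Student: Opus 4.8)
The plan is to prove density directly: given an arbitrary $\Pforcing$-condition $p' \leq p$, I will produce some $q \leq p'$ lying in the stated collection. Unwinding the definitions, this means exhibiting finite sets $E_0,E_1$ and a parameter $v \geq u$ such that $(q,E_0,E_1,v)$ is an $\Rforcing_{p^*,D_0^*,D_1^*,\name{I}^*}$-condition extending $(p,D_0,D_1,u)$ with $|E_i| = |D_i|+1$ for each $i < 2$. Concretely, I must adjoin one new number to each of $D_0$ and $D_1$; the number added to $D_i$ must be forced by $q$ to lie in $\name{I}^* \cap [u,\infty)$ and in $\name{A}_i$, after which $v$ is chosen above both new numbers.

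The combinatorial heart of the argument is that $p'$ forces both $\name{A}_0 \cap \name{I}^* \cap [u,\infty)$ and $\name{A}_1 \cap \name{I}^* \cap [u,\infty)$ to be infinite. To see this, suppose toward a contradiction that this fails for one of them, say for $\name{A}_j \cap \name{I}^* \cap [u,\infty)$; then there is an extension $r \leq p'$ and a number $u' \geq u$ such that $r$ forces $\name{I}^* \cap [u',\infty) \cap \name{A}_j = \emptyset$. Since $p' \leq p_\Gamma$ forces $\Gamma(\dot{G})$ to be a stable coloring, $r$ then forces $\name{I}^* \cap [u',\infty) \subseteq \name{A}_{1-j}$; and since $p^*$, hence $r$, forces $\name{I}^*$ to be infinite and low over $\dot{G}$, $r$ forces $\name{I}^* \cap [u',\infty)$ to be an infinite limit-homogeneous set low over $\dot{G}$ (a cofinite subset of $\name{I}^*$ is Turing equivalent to $\name{I}^*$ over $\dot{G}$, so lowness over $\dot{G}$ is preserved). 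This contradicts the choice of $p_\Gamma$. Consequently $p'$ forces both parts infinite, and in particular forces that there exist $x_0 \in \name{A}_0 \cap \name{I}^* \cap [u,\infty)$ and $x_1 \in \name{A}_1 \cap \name{I}^* \cap [u,\infty)$.

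Next I would extract witnesses. Using the standard density properties of the forcing $\Pforcing$ together with the apparatus of forcing over the hyperarithmetic hierarchy (Sacks \cite[Section IV.3]{Sacks-1990}), I would pass to an extension $q \leq p'$ and fix specific distinct numbers $x_0 \neq x_1$ such that $q$ forces $x_0 \in \name{A}_0 \cap \name{I}^* \cap [u,\infty)$ and $q$ forces $x_1 \in \name{A}_1 \cap \name{I}^* \cap [u,\infty)$. I then set $E_i = D_i \cup \{x_i\}$ and $v = \max\{x_0,x_1\}+1$, and carry out the routine verifications: $v \geq u$ since $x_0,x_1 \geq u$; the tuple $(q,E_0,E_1,\name{I}^* \cap [v,\infty))$ is a $\Qforcing_{p_\Gamma}$-condition because $q \leq p^* \leq p_\Gamma$, $q$ forces $E_i \subseteq \name{A}_i$, $q$ forces $\name{I}^* \cap [v,\infty)$ to be infinite and low over $\dot{G}$, and $\max E_0 \cup E_1 < \min(\name{I}^* \cap [v,\infty))$ (the elements of $D_0 \cup D_1$ already lie below $\min(\name{I}^* \cap [u,\infty)) \leq \min(\name{I}^* \cap [v,\infty))$, and $x_0,x_1 < v$); hence it is an $\Rforcing_{p^*,D_0^*,D_1^*,\name{I}^*}$-condition; and it extends $(p,D_0,D_1,u)$ since $q \leq p$, $D_i \subseteq E_i$, $q$ forces $E_i \smallsetminus D_i = \{x_i\} \subseteq \name{I}^* \cap [u,\infty)$, and $v \geq u$. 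Finally $|E_i| = |D_i|+1$ by construction, so $q$ is as required.

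I expect the main obstacle to be the witness-extraction step: one must be certain the forcing admits an extension deciding membership of a fixed number in $\name{A}_i$ --- a limit of the hyperarithmetic operator $\Gamma$ evaluated at $\dot{G}$ --- and in $\name{I}^*$, a name for a set merely low over $\dot{G}$. This is exactly where the deliberate choice to take reservoirs as $\Pforcing$-names, and to force over the hyperarithmetic hierarchy rather than over a computable or low structure, is used. The routine companion point to check alongside it is that deleting a finite initial segment of $\name{I}^*$ preserves the property ``infinite and low over $\dot{G}$.''
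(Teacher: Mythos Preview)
Your argument is correct and follows essentially the same route as the paper's proof: both hinge on the observation that if some extension forced $\name{I}^* \cap [u,\infty)$ (or a tail of it) to lie in a single $\name{A}_j$, this would yield an infinite limit-homogeneous set low over $\name{G}$, contradicting the choice of $p_\Gamma$; from this one extracts witnesses $x_0,x_1$ and sets $E_i = D_i \cup \{x_i\}$, $v = \max\{x_0,x_1\}+1$. The only cosmetic difference is that the paper first passes to an extension deciding the existential ``$(\exists x \geq u)[x \in \name{I}^* \cap \name{A}_i]$'' for each $i$ and derives the contradiction from a negative decision, whereas you argue that $p'$ already forces each intersection to be infinite; your version proves a nominally stronger intermediate claim but the content is identical.
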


\begin{proof}
	Fix any $r \leq p$. Let $q$ be any extension of $r$ deciding, for each $i < 2$, if there is an $x \geq u$ in $\name{I}^* \cap \name{A}_i$. If for some $i < 2$, $q$ forces that there is no such $x$, then $q$ forces that $\name{I}^* \cap [u,\infty) \subseteq A_{1-i}$. But as $q \leq p^*$, we have that $q$ forces that $\name{I}^*$ is an infinite set which is low over $\name{G}$, and hence that $\name{I}^* \cap [u,\infty)$ is an infinite set which is low over $\name{G}$. But by assumption, $p_\Gamma$ forces that there is no such set contained in $\name{A}_{1-i}$, so since $q \leq p_\Gamma$ this is a contradiction. Thus, it must be that $q$ forces, for each $i < 2$, that there is an $x \geq u$ in $\name{I}^* \cap \name{A}_i$. We can thus fix an $x_i \geq u$ for each $i < 2$ such that $q$ forces that $x_i \in \name{I}^* \cap \name{A}_i$. Let $E_i = D_i \cup \{x_i\}$ for each $i$, and let $v = \max \{x_0,x_1\} + 1$; then $(q,E_0,E_1,v)$ witnesses that $q$ is the desired extension.
\end{proof}

The next lemma facilitates the crucial step of reflecting a $\Rforcing_{p^*,D_0^*,D_1^*,\name{I}^*}$-condition into $\mathbb{P}$.

\begin{lemma}\label{L:progress}
	Let $(p,D_0,D_1,u)$ be an $\Rforcing_{p^*,D_0^*,D_1^*,\name{I}^*}$-condition, and assume that $f^p(n) = \unlock$ for some $n \in [|p^*|, |p|)$. For all $z \in \omega$, $j < 3$, and $\seq{0,t_0}, \seq{1,t_1} \in 2 \times s \smallsetminus M$, the collection of $\mathbb{P}$-conditions $q$ with the following property is dense below $p$: there exists an $\Rforcing_{p^*,D_0^*,D_1^*,\name{I}^*}$-condition $(q,E_0,E_1,v)$ extending $(p,D_0,D_1,u)$ and numbers $i < 2$ and $x > z$ such that $q$ forces that $\Psi_{t_i}(\name{G} \oplus E_i)(x) \downarrow = 1$ and $\name{G}_n(x) = j$.
\end{lemma}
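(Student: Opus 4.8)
The plan is to fix an arbitrary $\Pforcing$-condition $r \leq p$ and produce an extension $q \leq r$ lying in the displayed collection; density then follows. Two preliminary observations. First, since $f^p(n) = \unlock$ and the $f$-coordinate of a condition is only ever end-extended under $\leq$, every $\Pforcing$-extension of $p$ — in particular $r$, and whatever $q$ we construct — still has $f(n) = \unlock$; this is what will allow us to dictate $\name{G}_n$ freely on sufficiently large arguments. Second, $(r,D_0,D_1,u)$ is itself an $\Rforcing_{p^*,D_0^*,D_1^*,\name{I}^*}$-condition extending $(p,D_0,D_1,u)$: we have $r \leq p^*$, and the color-membership and reservoir clauses pass down unchanged from $(p,D_0,D_1,u)$.

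Now ask the forcing question: does there exist an $\Rforcing_{p^*,D_0^*,D_1^*,\name{I}^*}$-condition $(q,E_0,E_1,v)$ extending $(r,D_0,D_1,u)$, an index $i < 2$, and an $x > z$ such that $q$ forces both $\Psi_{t_i}(\name{G}\oplus E_i)(x)\downarrow = 1$ and $\name{G}_n(x) = j$? If so, that $q$ is the extension we want and we are done. So suppose not; I claim this contradicts $\seq{0,t_0},\seq{1,t_1}\notin M$. Fix $i < 2$. From the ``no'' answer, $r$ forces that for every finite $E_i \supseteq D_i$ with $E_i \smallsetminus D_i \subseteq \name{I}^* \cap [u,\infty)$ and every $x > z$, if $\name{G}_n(x) = j$ then $\Psi_{t_i}(\name{G}\oplus E_i)(x)\simeq 0$ (we treat an output $\geq 2$ as the separate, easier case in which $\Psi_{t_i}(\name{G}\oplus\name{H}_i)$ is not a characteristic function, handled in the global argument). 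On the other hand, because $\seq{i,t_i}\notin M$, the greedy construction of Lemma~\ref{L:Mexists} guarantees that no $\Rforcing_{p^*,D_0^*,D_1^*,\name{I}^*}$-extension of $(r,D_0,D_1,u)$ forces that $\Psi_{t_i}(\name{G}\oplus(E_i\cup F))(x)\simeq 0$ for all finite $F$ contained in its reservoir and all $x$ beyond some bound.

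The contradiction comes from combining these two facts, and this is where I expect the main work. Since ``$\simeq 0$'' is $\Pi^0_1$ in $\name{G}$, the second fact lets us extract, densely below $r$, an $\Rforcing_{p^*,D_0^*,D_1^*,\name{I}^*}$-extension $(q_i,E^i_0,E^i_1,v_i)$ of $(r,D_0,D_1,u)$, a concrete finite $F_i \subseteq \name{I}^*\cap[u,\infty)$ (forced to be so), and a concrete $x_i > z$ with $q_i \Vdash \Psi_{t_i}(\name{G}\oplus(D_i\cup F_i))(x_i)\downarrow = 1$. The aim is to upgrade this to force $\name{G}_n(x_i) = j$ as well, which would directly contradict the first fact. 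Because $n$ is unlocked in $q_i$, the column-$n$ use of this computation lies below $|\sigma^{q_i}_n|$, so if $x_i \geq |\sigma^{q_i}_n|$ we may extend $\sigma^{q_i}_n$ past $x_i$ with constant value $j$ without disturbing the computation; and by running the extraction with threshold $\max(z,|\sigma^r_n|)$ and filling every newly created position of $\sigma_n$ off the use with the value $j$, the same works whenever $x_i$ is not itself in the column-$n$ use. The genuinely delicate case is that $x_i$ falls inside the column-$n$ use of its own computation at a value pinned to be $\neq j$, and cannot be avoided no matter how the extraction is made, for both $i = 0$ and $i = 1$ simultaneously. Ruling this out is where the three available values for $\name{G}_n(x)$ — against only the two functional indices $t_0$ and $t_1$ — and the shared reservoir must be used: if it could not be ruled out, one would shrink $\name{I}^*$ and build an $\Rforcing_{p^*,D_0^*,D_1^*,\name{I}^*}$-extension of $(r,D_0,D_1,u)$ forcing $\Psi_{t_i}(\name{G}\oplus(E_i\cup F))$ to be eventually $\simeq 0$ uniformly in finite $F$ from the reservoir, for one of $i \in \{0,1\}$, contradicting the maximality of $M$. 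This last reduction — from ``no $j$-colored $1$ is forceable'' to ``eventual $\simeq 0$ is forceable'' — is the technical heart; the rest is routine bookkeeping with Mathias conditions, the shared reservoir, and the forcing definitions of the preceding subsections.
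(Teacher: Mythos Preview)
Your overall strategy---assume the density fails below some $r$ and derive a contradiction with the maximality of $M$---matches the paper's. But the execution is missing the key device, and the part you flag as ``the technical heart'' is exactly where your argument has a genuine gap.

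The paper does not try to manipulate column-$n$ uses of individual computations. Instead it formulates a single $\Pi^0_1(\name{G},\name{I}^*)$ statement $\varphi$: there exist $X_0,X_1$ partitioning $\name{I}^*\cap[u,\infty)$ such that for each $i<2$, no finite $F\subseteq X_i$ and $x>z$ give $\Psi_{t_i}(\name{G}\oplus(D_i\cup F))(x)\downarrow=1$ with $\name{G}_n(x)=j$. Decide $\varphi$ below $r$. If $\varphi$ fails, instantiate with the true color classes $X_i=\name{I}^*\cap[u,\infty)\cap\name{A}_i$ to extract the desired $(q,E_0,E_1,v)$ directly. If $\varphi$ holds, here is the move you are missing: because $\name{I}^*$ is forced low over $\name{G}$, $\varphi$ is equivalent (below $p^*$) to a $\Sigma^0_2(\name{G})$ formula, so by Lemma~\ref{L:lockspres} you may \emph{lock} column $n$ to the value $j$ and $\varphi$ remains forced. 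Now $\name{G}_n(x)=j$ holds for all large $x$, so the clause about $\name{G}_n$ drops out and $\varphi$ says outright that no finite $F\subseteq X_i$ makes $\Psi_{t_i}$ output $1$ past $z$. Apply the low basis theorem to get a low-over-$\name{G}$ partition $\name{X}_0,\name{X}_1$; one side $\name{X}_i$ is infinite, and $(\,\cdot\,,D_0,D_1,\name{X}_i)$ is then a $\Qforcing_{p_\Gamma}$-extension of $(p^*,D_0^*,D_1^*,\name{I}^*)$ witnessing that $\seq{i,t_i}$ should have entered $M$---contradiction.

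Two further points. First, the contradiction is obtained via a $\Qforcing_{p_\Gamma}$-extension with a \emph{new} reservoir $\name{X}_i$, not an $\Rforcing$-extension; your phrasing in terms of $\Rforcing$-extensions is too restrictive to reach the maximality of $M$. Second, the ``three values versus two functionals'' pigeonhole plays no role in this lemma; it is used only later, in the proof of Theorem~\ref{T:multiple}, to conclude that some side $L_i$ fails cohesiveness. Invoking it here is a sign that the argument has not yet found the right lever, which is the lock-then-apply-Lemma~\ref{L:lockspres} step combined with the low basis theorem.
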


\begin{proof}
	Fix any $r \leq p$. Consider the $\Pi^0_1(\name{G},\name{I}^*)$ formula $\psi(\name{G},\name{I}^*,X_0,X_1)$ of two set variables asserting:
	\begin{itemize}
		\item $X_0$ and $X_1$ partition $\name{I}^* \cap [u,\infty)$;
		\item for each $i < 2$, each $x > z$, and each finite set $F \subseteq X_i$ it is not the case that $\Psi_{t_i}(\name{G} \oplus (D_i \cup F))(x) \downarrow = 1$ and $\name{G}_{n}(x) = j$.
	\end{itemize}
	Let $\varphi(\name{G},\name{I}^*)$ be the formula $(\exists X_0,X_1)\psi(\name{G},\name{I}^*,X_0,X_1)$. Then $\varphi(\name{G},\name{I}^*)$ is also $\Pi^0_1(\name{G},\name{I}^*)$, and we can thus fix some $\widehat{r} \leq r$ that decides this formula.
	
	Suppose first that $\widehat{r}$ forces $\varphi(\name{G},\name{I}^*)$. Let $\widehat{r}'$ be the condition that is the same as $\widehat{r}$ except that $f^{\widehat{r}'}(n) = j$ for each $i < 2$. We claim that $\widehat{r}'$ forces $\varphi(\name{G},\name{I}^*)$. Indeed, as $\varphi(\name{G},\name{I}^*)$ is $\Pi^0_1(\name{G},\name{I}^*)$ and $p^*$ forces that $\name{I}^*$ is low over $\name{G}$, it follows that there is a $\Sigma^0_2(\name{G})$ formula $\theta(\name{G})$ that $p^*$ forces is equivalent to $\varphi(\name{G},\name{I}^*)$. Since $n \geq |p^*|$ we have that $\widehat{r},\widehat{r}' \leq p^*$, and so this equivalence is still forced by $\widehat{r}$ and $\widehat{r}'$. Thus, $\widehat{r}$ forces $\theta(\name{G})$, and hence so does $\widehat{r}'$ by Lemma \ref{L:lockspres}. Now it follows that $\widehat{r}'$ forces $\varphi(\name{G},\name{I}^*)$, as desired.
	
	By the uniformity of the low basis theorem, we can fix names $\name{X}_0$ and $\name{X}_1$ and a condition $\widehat{r}'' \leq \widehat{r}'$ forcing that $\name{X}_0 \oplus \name{X}_1$ is low over $\name{G}$ and $\psi(\name{G},\name{I}^*, \name{X}_0,\name{X}_1)$ holds. We may further assume that $\widehat{r}''$ decides, for each $i < 2$, whether or not $\name{X}_i$ is infinite. Since $\widehat{r}''$ forces that $\name{I}^*$ is infinite and $\name{X}_0 \cup \name{X}_1 = \name{I}^* \cap [u,\infty)$, we can fix $i < 2$ such that $\widehat{r}''$ forces that $\name{X}_i$ is infinite. But now consider the $\Qforcing_{p_\Gamma}$-condition $(\widehat{r}'',D_0,D_1,\name{X}_i)$. This is an extension (in $\Qforcing_{p_\Gamma}$) of $(p^*,D_0^*,D_1^*,\name{I}^*)$, and $\widehat{r}''$ forces that $\Psi_{t_i}(\name{G} \oplus (D_i \cup F))(x) \downarrow \simeq 0$ for all finite subset $F$ of $\name{X}_i$ and all $x > z$. By maximality of $M$, this means that $\seq{i,t_i}$ should be in $M$, even though we assumed it was not. This is a contradiction.
	
	We conclude that $\widehat{r}$ actually forces $\neg \varphi(\name{G},\name{I}^*)$, and so some $q \leq \widehat{r}$ must force
	\[
		\neg \psi(\name{G},\name{I}^*,\name{I}^* \cap [u,\infty) \cap \name{A}_0,\name{I}^* \cap [u,\infty) \cap \name{A}_1).
	\]
	In particular, there is an $i < 2$, an $x > z$, and a finite set $F$ such that $q$ forces that $F \subseteq \name{I}^* \cap [u,\infty) \cap \name{A}_i$ and that $\Psi_{t_i}(\name{G} \oplus (D_i \cup F))(x) \downarrow = 1$ and $\name{G}_{n}(x) = j$. Let $E_i = D_i \cup F$ and $E_{1-i} = E_i$, and let $v = \max F$. Then $q$ is the desired extension of $r$, as witnessed by $(q,E_0,E_1,v)$.
\end{proof}

\subsection{Putting it all together}

We are now ready to prove the main theorem of this section, which is Theorem \ref{T:main} for stable $2$-colorings. In fact, we prove following stronger result which clearly implies it.

\begin{theorem}\label{T:multiple}
	Let $\mathcal{G}$ be a hyperarithmetically generic filter on $\Pforcing$. Then for every stable coloring $c : [\omega]^2 \to 2$ hyperarithmetical in $G^\mathcal{G}$, and every finite collection of Turing functionals $\Psi_0,\ldots,\Psi_{s-1}$, there exists an infinite limit-homogeneous set $L$ for $c$ such that $\Psi_t(G^\mathcal{G} \oplus L)$ is not an infinite cohesive set for $G^\mathcal{G}$, for any $t < s$.
\end{theorem}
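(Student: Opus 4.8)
The plan is to fix a stable coloring $c:[\omega]^2\to 2$ hyperarithmetical in $G^{\mathcal{G}}$, a finite tuple $\Psi_0,\ldots,\Psi_{s-1}$ of Turing functionals, and a hyperarithmetical operator $\Gamma$ with $\Gamma(G^{\mathcal{G}})=c$. Since $\mathcal{G}$ is hyperarithmetically generic and every dense set I invoke below is hyperarithmetical in $\Gamma$ and the $\Psi_t$, $\mathcal{G}$ will meet all of them. I would then split into two cases, according to whether some condition in $\mathcal{G}$ forces, in the $\mathbb{P}$-forcing language, that $\Gamma(\name{G})$ has an infinite limit-homogeneous set which is low over $\dot{G}$; equivalently (by the usual density computation for the existential quantifier over names), whether there are a $\mathbb{P}$-name $\name{L}$ and a $p\in\mathcal{G}$ such that $p$ forces that $\name{L}$ is infinite, limit-homogeneous for $\Gamma(\name{G})$, and low over $\dot{G}$.

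In the first case, set $L=\name{L}^{\mathcal{G}}$. By genericity the forced facts hold of $L$, so $L$ is an infinite limit-homogeneous set for $c$ that is low over $G^{\mathcal{G}}$, whence $G^{\mathcal{G}}\oplus L$ is low over $G^{\mathcal{G}}$. If $\Psi_t(G^{\mathcal{G}}\oplus L)$ were an infinite cohesive set for $G^{\mathcal{G}}$ for some $t<s$, it would be an infinite cohesive set for $G^{\mathcal{G}}$ that is itself low over $G^{\mathcal{G}}$, contradicting Lemma~\ref{L:nolow}. So this single $L$ defeats all $s$ functionals simultaneously.

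In the second case, $\mathcal{G}$ contains a condition $p_\Gamma$ forcing that $\Gamma(\name{G})$ is a stable coloring $[\omega]^2\to 2$ (true by the hypothesis on $c$, hence forced by some condition in $\mathcal{G}$) with no infinite limit-homogeneous set low over $\dot{G}$ (the negative side of the above dichotomy, forced because we are not in the first case; meeting the relevant hyperarithmetical dense set of conditions deciding the existential-over-names statement produces such a $p_\Gamma$ in $\mathcal{G}$). This $p_\Gamma$ is exactly as fixed at the start of Section~\ref{S:Qforcing}, so Lemmas~\ref{L:Mexists},~\ref{L:infsides}, and~\ref{L:progress} apply below it. Meeting the dense set of Lemma~\ref{L:Mexists}, I fix in $\mathcal{G}$ data $p^{*},D_0^{*},D_1^{*},\name{I}^{*}$ and a maximal $M\subseteq 2\times s$ as there. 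I also fix a column $n\geq|p^{*}|$ with $\lim_x G^{\mathcal{G}}_n(x)$ undefined; then every sufficiently long condition in $\mathcal{G}$ has $f(n)=\unlock$ with $n\in[|p^{*}|,|p|)$, so Lemma~\ref{L:progress} applies for this $n$ at all later stages. Threading an $\Rforcing_{p^{*},D_0^{*},D_1^{*},\name{I}^{*}}$-generic object through $\mathcal{G}$ — at each step committing to the $\mathbb{R}$-data $(E_0,E_1,v)$ accompanying the $\mathbb{P}$-extension produced by Lemma~\ref{L:infsides} or Lemma~\ref{L:progress}, and harmlessly enlarging $v$ past the uses of the finitely many computations forced so far — I obtain sets $L_0,L_1$: each $L_i$ is forced to lie in $\name{A}_i$ and is therefore limit-homogeneous for $c$; both are infinite (meeting Lemma~\ref{L:infsides} cofinally); and, by meeting Lemma~\ref{L:progress} for all $z\in\omega$, all $j<3$, and all pairs $\seq{0,t_0},\seq{1,t_1}\in 2\times s\smallsetminus M$, the following statement $(\ast)$ holds: for every such pair and every $j<3$ there are infinitely many $x$ with $\Psi_{t_i}(G^{\mathcal{G}}\oplus L_i)(x)\downarrow=1$ and $G^{\mathcal{G}}_n(x)=j$ for some $i<2$.

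Finally I would read off the winner between $L_0$ and $L_1$. Put $T_i=\{t<s:\seq{i,t}\notin M\}$. If $\seq{i,t}\in M$ then, since $p^{*}\in\mathcal{G}$, in the generic there is a $z$ with $\Psi_t(G^{\mathcal{G}}\oplus(D_i^{*}\cup F))(x)\simeq 0$ for all finite $F\subseteq(\name{I}^{*})^{\mathcal{G}}$ and all $x>z$; as any convergent value of $\Psi_t(G^{\mathcal{G}}\oplus L_i)$ depends on only finitely much of $L_i$ and $L_i\smallsetminus D_i^{*}\subseteq(\name{I}^{*})^{\mathcal{G}}$, the set $\{x:\Psi_t(G^{\mathcal{G}}\oplus L_i)(x)=1\}$ is contained in $[0,z]$, so $\Psi_t(G^{\mathcal{G}}\oplus L_i)$ is not an infinite cohesive set for $G^{\mathcal{G}}$. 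Hence if $T_0=\emptyset$ then $L=L_0$ works, and symmetrically for $T_1$; so assume both are nonempty. Suppose $L_0$ fails to defeat some $\Psi_{t_0}$, so that $C_0:=\{x:\Psi_{t_0}(G^{\mathcal{G}}\oplus L_0)(x)=1\}$ is an infinite cohesive set for $G^{\mathcal{G}}$; then necessarily $t_0\in T_0$. For any $t_1\in T_1$, if $C_1:=\{x:\Psi_{t_1}(G^{\mathcal{G}}\oplus L_1)(x)=1\}$ were also an infinite cohesive set for $G^{\mathcal{G}}$, then along column $n$ there would be $y_0,y_1<3$ with $G^{\mathcal{G}}_n(x)=y_i$ for all but finitely many $x\in C_i$; choosing $j\in\{0,1,2\}\smallsetminus\{y_0,y_1\}$ (possible, since there are three colors but only the two forbidden values) and applying $(\ast)$ to $\seq{0,t_0},\seq{1,t_1}$ and this $j$ gives infinitely many $x$ lying in some $C_i$ with $G^{\mathcal{G}}_n(x)=j$, hence, by the pigeonhole principle, infinitely many such $x$ in a single $C_i$, contradicting $G^{\mathcal{G}}_n(x)=y_i\neq j$ for cofinitely many $x\in C_i$. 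So $\Psi_{t_1}(G^{\mathcal{G}}\oplus L_1)$ is not an infinite cohesive set for $G^{\mathcal{G}}$ for any $t_1\in T_1$, and combining this with the $M$-case shows $L=L_1$ defeats every $\Psi_t$ with $t<s$. Either way we have produced the required infinite limit-homogeneous set. The crux of the argument is this use of the third color to separate the two putative cohesive limits $y_0$ and $y_1$; the main obstacle, however, is not this short combinatorial step but the bookkeeping of the two-layer forcing — ensuring the $\Rforcing_{p^{*},D_0^{*},D_1^{*},\name{I}^{*}}$-generic is genuinely threaded through $\mathcal{G}$, that the diagonalizing column $n$ stays unlocked throughout, that reservoirs always outrun the uses of the finitely many forced computations (so the forced instances of $\Psi_{t_i}(\name{G}\oplus E_i)$ remain correct once $E_i$ is extended to $L_i$), and that the case split is phrased in the forcing language, which is what makes Lemma~\ref{L:nolow} applicable in the first case and yields the $p_\Gamma$ of Section~\ref{S:Qforcing} in the second.
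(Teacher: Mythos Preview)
Your proof is correct and follows essentially the same approach as the paper: split on whether $c$ has a low limit-homogeneous set (handled by Lemma~\ref{L:nolow}), otherwise fix $p_\Gamma$, invoke Lemma~\ref{L:Mexists} to get $(p^*,D_0^*,D_1^*,\name{I}^*)$ and $M$, and then thread an $\Rforcing_{p^*,D_0^*,D_1^*,\name{I}^*}$-sequence through $\mathcal{G}$ using Lemmas~\ref{L:infsides} and~\ref{L:progress}. The only cosmetic differences are that you select the diagonalizing column $n$ as one where $\lim_x G^{\mathcal{G}}_n(x)$ fails to exist (the paper instead takes the least unlocked coordinate of some $p_0\in\mathcal{G}$ above $|p^*|$, which amounts to the same thing), and in the endgame you use cohesiveness of the putative $C_0,C_1$ to extract limit values $y_0,y_1<3$ and then pick $j\notin\{y_0,y_1\}$, whereas the paper pigeonholes the three colors $j$ into the two sides $i$; both are equally valid realizations of the ``three colors, two sides'' trick.
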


\begin{proof}
	Let $c$ and $\Psi_0,\ldots,\Psi_{s-1}$ be given. Fix a hyperarithmetical operator $\Gamma$ such that $c = \Gamma(G^\mathcal{G})$. If $c$ has an infinite limit-homogeneous set which is low over $G^\mathcal{G}$, then we can take this to be $L$ and then we are done by Lemma \ref{L:nolow}. So assume otherwise, and choose $p_\Gamma \in \mathcal{G}$ forcing that $\Gamma(\name{G})$ is a stable coloring $[\omega]^2 \to 2$ with no infinite limit-homogeneous set which is low over $\dot{G}$. Define $\name{A}_0$, $\name{A}_1$, and $\Qforcing_{p_\Gamma}$ as in the previous section. Since $\mathcal{G}$ is generic, we may fix a $p^* \in \mathcal{G}$, a $\Qforcing_{p_\Gamma}$-condition $(p^*,D^*_0,D^*_1,\name{I}^*)$, and a maximal subset $M$ of $2 \times s$ as in Lemma \ref{L:Mexists}. We define a sequence of $\Rforcing_{p^*,D_0^*,D_1^*,\name{I}^*}$-conditions
	\[
		(p_0,D_{0,0},D_{0,1},u_0) \geq (p_1,D_{1,0},D_{1,1},u_1) \geq (p_2,D_{2,0},D_{2,1},u_2) \geq \cdots
	\]
	with $p_z \in \mathcal{G}$ for all $z \in \omega$.
	
	If there is an $i < 2$ such that $\seq{i,t} \in M$ for all $t < s$, let $(p_0,D_{0,0},D_{0,1},u_0) = (p^*,D^*_0,D^*_1,\name{I}^*)$. Now given $(p_z,D_{z,0},D_{z,1},u_z)$ for some $z$, apply Lemma \ref{L:infsides} to find an extension $(p_{z+1},D_{z+1,0},D_{z+1,1},u_{z+1})$ with $p_{z+1} \in \mathcal{G}$ and $|D_{z+1,i}| = |D_{z,i}| + 1$ for each $i < 2$. Thus, $L = \bigcup_{z \in \omega} D_{z,i}$ is an infinite limit homogeneous set for $\Gamma(G^\mathcal{G})$, and by assumption, and the definition of $M$, we have $\Psi_t(G^\mathcal{G} \oplus L)(x) \simeq 0$ for all $t < s$ and all sufficiently large $x$. In particular, $\Psi_t(G^\mathcal{G} \oplus L)$ is not an infinite cohesive set for $G^\mathcal{G}$, as desired.
	
	Now suppose that for each $i < 2$ there is at least one $t < s$ with $\seq{i,t} \notin M$. Let $p_0$ be any extension of $p^*$ in $\mathcal{G}$ such that $f^{p_0}(n) = \unlock$ for some $n \in [|p^*|,|p_0|)$, and denote the least such $n$ by $n_0$. Let $D^0_i = D^*_i$ for each $i < 2$, and $u_0 = 0$, so that $(p_0,D_{0,0},D_{0,1},u_0) = (p_0,D^*_0,D^*_1,\name{I}^*)$. Assume next that we have defined $(p_z,D_{z,0},D_{z,1},u_z)$ for some $z$. If $z$ is even, define $(p_{z+1},D_{z+1,0},D_{z+1,1},u_{z+1})$ as in the preceding case, thereby ensuring that $|D_{z+1,i}| = |D_{z,i}| + 1$ for each $i < 2$. Next, suppose $z$ is odd. Assume we have a fixed map $h$ from the odd integers onto the set
	\[
		[(\{0\} \times s) \times (\{1\} \times s) \smallsetminus M^2] \times 3,
	\]
	in which the pre-image of every element in the range is infinite. Say $h(z) = \seq{\seq{0,t_0},\seq{1,t_1},j}$. We then apply Lemma \ref{L:progress} to find $(p_{z+1},D_{z+1,0},D_{z+1,1},u_{z+1})$ extending $(p_z,D_{z,0},D_{z,1},u_z)$ with $p_{z+1} \in \mathcal{G}$ such that for some $i < 2$ and $x > z$ we have that $p_{z+1}$ forces $\Psi_{t_i}(\name{G} \oplus D_{z+1,i})(x) \downarrow = 1$ and $\name{G}_{n_0}(x) = j$.
	
	Now, let $L_i = \bigcup_{z \in \omega} D_{z,i}$ for each $i < 2$, which is an infinite limit-homogeneous set for $\Gamma(G^\mathcal{G})$. If, for each $i < 2$, there is $t_i < s$ such that $\Psi_{t_i}(G^\mathcal{G} \oplus L_i)$ is an infinite cohesive set for $G^\mathcal{G}$, then by genericity of $\mathcal{G}$ and the definition of $M$, it must be that $\seq{i,t_i} \notin M$. For each $j < 3$, there are infinitely many odd numbers $z$ such that $h(z) = \seq{\seq{0,t_0},\seq{1,t_1},j}$, and by construction, for each such $z$, there is an $i < 2$ and an $x > z$ such that $\Psi_{t_i}(G^\mathcal{G} \oplus L_i)(x) \downarrow = 1$ and $G^\mathcal{G}_{n_0}(x) = j$. Denote the least such $i$ by $i_z$. Thus, for each $j < 3$ there must be a $k_j < 2$ such that $i_z = k_j$ for infinitely many $z$ with $h(z) = \seq{\seq{0,t_0},\seq{1,t_1},j}$. Fix $j,j' < 3$ with $j \neq j'$ and $k_j = k_{j'}$, and denote the latter by $i$. Then there are infinitely many $x$ such that $\Psi_{t_i}(G^\mathcal{G} \oplus L_i)(x) \downarrow = 1$ and $G^\mathcal{G}_{n_0}(x) = j$, and infinitely many $x$ such that $\Psi_{t_i}(G^\mathcal{G} \oplus L_i)(x) \downarrow = 1$ and $G^\mathcal{G}_{n_0}(x) = j'$. Thus, $\Psi_{t_i}(G^\mathcal{G} \oplus L_i)$ is not cohesive for $G^\mathcal{G}$, a contradiction.
	
	We conclude that there is an $i < 2$ such that $\Psi_{t}(G^\mathcal{G} \oplus L_i)$ is not an infinite cohesive set for $G^\mathcal{G}$, for any $t < s$, as was to be shown.
\end{proof}

\section{Extensions to arbitrary colorings}\label{S:extensions}

To prove Theorem \ref{T:main} in full generality, we need to modify our construction of the family $G = \seq{G_0,G_1,\ldots}$. Specifically, whereas a $3$-bounded family of functions sufficed to defeat all potential stable $2$-colorings, we will in general need a $(k+1)$-bounded family to defeat all stable $k$-colorings. For this reason, we introduce the following modification of the forcing notion $\Pforcing$ defined earlier.

\begin{definition}
	Let $\mathbb{P}_\omega$ be the notion of forcing whose conditions are tuples $p = (\sigma_0,\ldots,\sigma_{|p|-1},b,f)$ as follows:
	\begin{itemize}
		\item $|p| \in \omega$;
		\item $\sigma_n \in \omega^{<\omega}$ for each $n < |p|$;
		\item $b$ is a function $|p| \to \omega$, and $\max \ran \sigma_n < b(n)$ for all $n < |p|$;
		\item $f$ is a function $|p| \to 3 \cup \{\unlock\}$, and if $f(n) \neq \unlock$ for some $n < |p|$ then $f(n) < b(n)$.
	\end{itemize}
	A condition $q = (\tau_0,\ldots,\tau_{|q|-1},c,g)$ extends $p$, written $q \leq p$, if:
	\begin{itemize}
		\item $|p| \leq |q|$;
		\item $b \preceq c$;
		\item $f \preceq g$;
		\item $\sigma_n \preceq \tau_n$ for all $n < |p|$;
		\item if $f(n) \neq \unlock$ for some $n < |p|$ then $\tau_n(x) = f(n)$ for all $x \in [|\sigma_n|,|\tau_n|)$.
	\end{itemize}
\end{definition}

\noindent We write $\sigma^p_n$, $b^p$, $f^p$ for $\sigma_n$, $b$, and $f$, as before. It is clear that if $\mathcal{G}$ is a sufficiently generic filter on $\mathbb{P}_\omega$ then $G^{\mathcal{G}} = \bigoplus G^{\mathcal{G}}_n$, where again $G^\mathcal{G}_n = \bigcup_{p \in \mathcal{G}, |p| > n} \sigma^p_n$, is now an instance of $\COH_\omega$. Everything else transfers from $\Pforcing$ to $\mathbb{P}_\omega$ analogously, with obvious changes. In particular, this is true of Lemmas \ref{L:lockspres} and \ref{L:nolow}.

Now, fix a hyperarithmetical operator $\Gamma$, and Turing functionals $\Psi_0,\ldots,\Psi_{s-1}$. Suppose $p_\Gamma$ is a $\mathbb{P}_\omega$-condition forcing, for some $k \geq 2$, that $\Gamma(\name{G})$ is a stable coloring $[\omega]^2 \to k$ with no infinite limit-homogeneous set which is low over $\name{G}$. For each $i < k$, let $\name{A}_i$ be a name for the set $\{x \in \omega : \lim_y \Gamma(\name{G})(x,y) = i\}$. We define a suitable modification of the forcing notion $\Qforcing_{p_\Gamma}$.

\begin{definition}
	Let $\mathbb{Q}_{\omega,p_\Gamma}$ be the notion of forcing whose conditions are tuples $(p,D_0,\ldots,D_{k-1},\name{I})$ as follows:
	\begin{itemize}
		\item $p$ is a $\Pforcing$-condition extending $p_\Gamma$;
		\item $D_i$ is a finite set for each $i < k$, and $p$ forces that $D_i \subseteq \name{A}_i$;
		\item $\name{I}$ is a $\mathbb{P}_\omega$-name, and $p$ forces that $\name{I}$ is an infinite set which is low over $\dot{G}$, and $\max \bigcup_{i < k} D_i < \min \name{I}$.	
	\end{itemize}
	A condition $(q,E_0,\ldots,E_{k-1},\name{J})$ extends $(p,D_0,\ldots,D_{k-1},\name{I})$ if:
	\begin{itemize}
		\item $q \leq p$;
		\item $D_i \subseteq E_i$ for each $i < k$;
		\item $q$ forces that $E_i \smallsetminus D_i \subseteq \name{I}$ for each $i < k$, and that $\name{J} \subseteq \name{I}$.
	\end{itemize}
\end{definition}

\noindent We get an analogue of Lemma \ref{L:Mexists}, stated below. The proof is entirely the same.

\begin{lemma}
	The collection of $\Pforcing_\omega$-conditions $p^*$ with the following property is dense below $p_\Gamma$: there exists a $\mathbb{Q}_{\omega,p_\Gamma}$-condition $(p^*,D_0^*,\ldots,D_{k-1}^*,\name{I}^*)$ and a maximal subset $M$ of $k \times s$ such that for all $\seq{i,t} \in M$, $p^*$ forces that there is a $z \in \omega$ such that $\Psi_t(\name{G} \oplus (D_i^* \cup F))(x) \simeq 0$ for all finite sets $F \subseteq \name{I}^*$ and all $x > z$.
\end{lemma}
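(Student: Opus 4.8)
The plan is to reproduce the proof of Lemma~\ref{L:Mexists} verbatim, replacing $2$ by $k$ throughout; nothing in that argument used that the coloring took only two values. Fix $p \leq p_\Gamma$; we produce a $p^*$ as required below $p$. Fix an enumeration of all pairs in $k \times s$. Set $M_0 = \emptyset$ and let $(p^0,D^0_0,\ldots,D^0_{k-1},\name{I}^0)$ be the $\mathbb{Q}_{\omega,p_\Gamma}$-condition $(p,\emptyset,\ldots,\emptyset,\name{\omega})$. Given $M_m$ and $(p^m,D^m_0,\ldots,D^m_{k-1},\name{I}^m)$ for some $m < ks$, let $\seq{i,t}$ be the $(m+1)$-st pair in the enumeration. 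If there is a condition $(q,E_0,\ldots,E_{k-1},\name{J})$ extending $(p^m,D^m_0,\ldots,D^m_{k-1},\name{I}^m)$ such that $q$ forces that there is a $z \in \omega$ with $\Psi_t(\name{G} \oplus (E_i \cup F))(x) \simeq 0$ for all finite $F \subseteq \name{J}$ and all $x > z$, put $M_{m+1} = M_m \cup \set{\seq{i,t}}$ and let $(p^{m+1},D^{m+1}_0,\ldots,D^{m+1}_{k-1},\name{I}^{m+1})$ be such a $(q,E_0,\ldots,E_{k-1},\name{J})$; otherwise set $M_{m+1} = M_m$ and leave the condition unchanged. Finally put $M = M_{ks}$ and $(p^*,D^*_0,\ldots,D^*_{k-1},\name{I}^*) = (p^{ks},D^{ks}_0,\ldots,D^{ks}_{k-1},\name{I}^{ks})$.

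Two routine checks remain. First, for $\seq{i,t} \in M$ added at stage $m+1$, the witness $z$ obtained there still works for the final condition: since $(p^{ks},\ldots)$ extends $(p^{m+1},\ldots)$, the condition $p^*$ forces $D^*_i \smallsetminus D^{m+1}_i \subseteq \name{I}^{m+1}$ and $\name{I}^* \subseteq \name{I}^{m+1}$, so every $D^*_i \cup F$ with $F \subseteq \name{I}^*$ finite has the form $D^{m+1}_i \cup F'$ with $F' \subseteq \name{I}^{m+1}$ finite; hence $p^* \leq p^{m+1}$ forces $\Psi_t(\name{G} \oplus (D^*_i \cup F))(x) \simeq 0$ for all such $F$ and all $x > z$. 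Second, $M$ is maximal: if $\seq{i,t} \notin M$ was considered at stage $m+1$, then by construction no extension of $(p^m,\ldots)$ has the required property, and since every extension of $(p^*,\ldots)$ is also an extension of $(p^m,\ldots)$ by transitivity, no such extension can witness $\seq{i,t}$ either, so $M$ cannot be enlarged.

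There is no real obstacle here: the only point to verify is that the proof of Lemma~\ref{L:Mexists}, together with the lemmas it relies on (Lemma~\ref{L:lockspres} and the uniformity of the low basis theorem, both of which transfer from $\Pforcing$ to $\mathbb{P}_\omega$ with the obvious changes already noted), never appeals to the specific bound $2$. The induction simply runs for $ks$ steps instead of $2s$, and the greedy choice is made over $k \times s$ rather than $2 \times s$.
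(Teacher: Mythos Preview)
Your proof is correct and follows exactly the approach of the paper's Lemma~\ref{L:Mexists}, replacing $2$ by $k$ and $2s$ by $ks$; the added verification that the witnesses persist under further extension and that $M$ is maximal simply spells out what the paper leaves as ``clearly''. One minor inaccuracy in your commentary: Lemma~\ref{L:Mexists} itself does not invoke Lemma~\ref{L:lockspres} or the low basis theorem---those appear only in the proof of Lemma~\ref{L:progress}---so the parenthetical in your final paragraph is misplaced, though this has no bearing on the argument.
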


Fixing $(p^*,D_0^*,\ldots,D_{k-1}^*,\name{I})*$ and $M$ as above, we can define an analogue of the restricted forcing of Definition \ref{D:Rforcing}, and obtain analogues of Lemmas \ref{L:infsides} and \ref{L:progress}. For clarity, we include the definition and statements, and omit the proofs, which carry over from above, mutatis mutandis.

\begin{definition}
	Let $\Rforcing_{\omega,p^*,D_0^*,\ldots,D_{k-1}^*,\name{I}^*}$ be the restriction of $\Qforcing_{\omega,p_\Gamma}$ to conditions extending $(p^*,D_0^*,\ldots,D_{k-1}^*,\name{I}^*)$ of the form $(p,D_0,\ldots,D_{k-1},\name{I}^* \cap [u,\infty))$.
\end{definition}

As before, we write $(p,D_0,\ldots,D_{k-1},u)$ for $(p,D_0,\ldots,D_{k-1},\name{I}^* \cap [u,\infty))$.

\begin{lemma}
	Let $(p,D_0,\ldots,D_{k-1},u)$ be an $\Rforcing_{\omega,p^*,D_0^*,\ldots,D_{k-1}^*,\name{I}^*}$-condition. The collection of $\Pforcing_\omega$-conditions $q$ for which there exists an $\Rforcing_{\omega,p^*,D_0^*,\ldots,D_{k-1}^*,\name{I}^*}$-condition $(q,E_0,\ldots,E_{k-1},v)$ extending $(p,D_0,\ldots,D_{k-1},u)$, and satisfying $|E_i| = |D_i| + 1$ for each $i < k$, is dense below $p$.
\end{lemma}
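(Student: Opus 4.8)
The plan is to run the proof of Lemma~\ref{L:infsides} essentially unchanged, the extra colors requiring only that we keep track of all of the $\name{A}_i$ at once. Fix a condition $r \leq p$. I would first pass to an extension $q \leq r$ that decides, for each $i < k$, whether there is an $x \geq u$ with $x \in \name{I}^* \cap \name{A}_i$. Suppose for the moment that $q$ forces that such an $x$ exists for every $i < k$. Then finitely many further extensions, each preserving the finitely many numerical facts already secured, produce a condition $\widehat{q} \leq q$ and numbers $x_0,\ldots,x_{k-1}$ such that $\widehat{q}$ forces $x_i \in \name{I}^* \cap [u,\infty) \cap \name{A}_i$ for each $i < k$. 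Since the sets $\name{A}_i$ are pairwise disjoint these witnesses are automatically distinct, so setting $E_i = D_i \cup \{x_i\}$ for each $i < k$ and $v = \max_{i<k} x_i + 1$ yields an $\Rforcing_{\omega,p^*,D_0^*,\ldots,D_{k-1}^*,\name{I}^*}$-condition $(\widehat{q},E_0,\ldots,E_{k-1},v)$ extending $(p,D_0,\ldots,D_{k-1},u)$ with $|E_i| = |D_i| + 1$ for each $i$, and $\widehat{q} \leq r$ is the desired condition.

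So, exactly as in Lemma~\ref{L:infsides}, the work is in ruling out the alternative, namely that $q$ forces, for some fixed $i < k$, that there is no $x \geq u$ in $\name{I}^* \cap \name{A}_i$; equivalently, that $q$ forces $\name{I}^* \cap [u,\infty) \cap \name{A}_i = \emptyset$. When $k = 2$ this case is eliminated exactly as in Lemma~\ref{L:infsides}: $q$ would then force that $\name{I}^* \cap [u,\infty)$ is an infinite set, low over $\name{G}$, and contained in $\name{A}_{1-i}$, contradicting $q \leq p_\Gamma$. For $k > 2$ the inclusion we get, $\name{I}^* \cap [u,\infty) \subseteq \bigcup_{j \neq i} \name{A}_j$, is no longer into a single color class, so this route is not directly available. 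The way out I have in mind is to ensure that $q$ forces $\name{I}^* \cap \name{A}_i$ to be \emph{infinite} for every $i < k$: then $q$ forces $\name{I}^* \cap [u,\infty) \cap \name{A}_i$ to be infinite, which is incompatible with its being empty, so the bad case cannot occur and $q$ must force a witness for every color.

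The one genuinely new point, therefore, is to secure that property of $\name{I}^*$, and I expect it to be the crux of adapting the section. I would build it in at the level of the choice of $p_\Gamma$ and the analogue of Lemma~\ref{L:Mexists}. First, we may assume $p_\Gamma$ forces every $\name{A}_i$ to be infinite; this is harmless for Theorem~\ref{T:main}, since a color appearing only finitely often contributes nothing essential and can be absorbed into, or simply dropped in favor of, the colors that do appear infinitely often, replacing $k$ by a smaller value. Second, I would add to the inductive maintenance in the proof of the analogue of Lemma~\ref{L:Mexists} the clause that, each time the reservoir is thinned (via the low basis theorem), it continues to meet every $\name{A}_i$ infinitely; the starting reservoir $\name{\omega}$ has this property by the first point, and it is preserved under passing from $\name{I}^*$ to $\name{I}^* \cap [u,\infty)$. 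Keeping all $k$ color classes ``active'' in the reservoir throughout these thinnings is the place I would be most careful; once it is arranged, the argument of the previous paragraph, and indeed the analogues of Lemmas~\ref{L:infsides} and~\ref{L:progress} and of Theorem~\ref{T:multiple}, go through for $k$ colors with only cosmetic changes.
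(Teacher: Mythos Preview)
You are right that the two-color proof of Lemma~\ref{L:infsides} does not generalize verbatim: when $q$ forces $\name{I}^* \cap [u,\infty) \cap \name{A}_i = \emptyset$, the remainder lies in a union of $k-1$ color classes, which is not limit-homogeneous, so no contradiction with $p_\Gamma$ results. The paper's ``mutatis mutandis'' does gloss over a real point here.

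Your proposed repair, however, conflicts with the analogue of Lemma~\ref{L:progress}. If you require the reservoirs appearing in the inductive construction of Lemma~\ref{L:Mexists} to meet every $\name{A}_j$ infinitely, then $M$ is only maximal relative to that restricted class of extensions. But the proof of Lemma~\ref{L:progress} invokes maximality of $M$ by exhibiting a $\Qforcing$-extension $(\widehat{r}'',D_0,\ldots,D_{k-1},\name{X}_i)$ in which $\name{X}_i$ is delivered by the low basis theorem from an arbitrary $\Pi^0_1$ class of partitions; there is no reason this $\name{X}_i$ meets every $\name{A}_j$ infinitely, so under your restricted notion it need not qualify as an extension, and the intended contradiction with $\seq{i,t_i}\notin M$ no longer follows. (Note also that the low basis theorem is invoked in Lemma~\ref{L:progress}, not in Lemma~\ref{L:Mexists}; the thinnings in the latter are just passages to arbitrary $\Qforcing$-extensions.) A fix that avoids this tension is to leave $\Qforcing$ and $M$ untouched and instead, in the main construction, first pass to a condition in $\mathcal{G}$ below $p^*$ deciding for each $i<k$ whether $\name{I}^*\cap\name{A}_i$ is infinite; calling the set of such $i$ by $S$, one has $|S|\geq 2$ (otherwise a tail of $\name{I}^*$ would be an infinite limit-homogeneous set low over $\name{G}$), and the analogues of both lemmas, run only over the colors in $S$, then genuinely go through with cosmetic changes.
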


\begin{lemma}
	Let $(p,D_0,\ldots,D_{k-1},u)$ be an $\Rforcing_{\omega,p^*,D_0^*,\ldots,D_{k-1}^*,\name{I}^*}$-condition., and assume that $b^p(n) = k+1$ and $f^p(n) = \unlock$ for some $n \in [|p^*|, |p|)$. For all $z \in \omega$, $j < 3$, and $\seq{0,t_0}, \ldots,\seq{k-1,t_{k-1}} \in k \times s \smallsetminus M$, the collection of $\mathbb{P}_\omega$-conditions $q$ with the following property is dense below $p$: there exists an $\Rforcing_{\omega,p^*,D_0^*,\ldots,D_{k-1}^*,\name{I}^*}$-condition $(q,E_0,\ldots,E_{k-1},v)$ extending $(p,D_0,\ldots,D_{k-1},u)$ and numbers $i < k$ and $x > z$ such that $q$ forces that $\Psi_{t_i}(\name{G} \oplus E_i)(x) \downarrow = 1$ and $\name{G}_n(x) = j$.
\end{lemma}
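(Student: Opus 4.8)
The plan is to carry over the proof of Lemma \ref{L:progress} essentially verbatim, with the index set $2$ replaced by $k$, the pair $D_0,D_1$ of reservoirs replaced by the $k$-tuple $D_0,\ldots,D_{k-1}$, and the two color classes $\name{A}_0,\name{A}_1$ replaced by $\name{A}_0,\ldots,\name{A}_{k-1}$. The content of the lemma is really that nothing in that argument used $k=2$: the classes $\name{A}_i$ are forced to partition $\omega$ because $p_\Gamma$ forces $\Gamma(\name{G})$ to be a stable $k$-coloring, and the Mathias-style bookkeeping --- one reservoir $D_i$ per color, all sharing the single common reservoir $\name{I}^*$ --- scales uniformly.

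Concretely, I would fix $r \leq p$ and form the $\Pi^0_1(\name{G},\name{I}^*)$ formula $\psi(\name{G},\name{I}^*,X_0,\ldots,X_{k-1})$ of $k$ set variables asserting that $X_0,\ldots,X_{k-1}$ partition $\name{I}^* \cap [u,\infty)$ and that for each $i < k$, each $x > z$, and each finite $F \subseteq X_i$ it is not the case that $\Psi_{t_i}(\name{G} \oplus (D_i \cup F))(x) \downarrow = 1$ and $\name{G}_n(x) = j$; then put $\varphi(\name{G},\name{I}^*) = (\exists X_0,\ldots,X_{k-1})\,\psi$, which is again $\Pi^0_1(\name{G},\name{I}^*)$, and pass to $\widehat{r} \leq r$ deciding $\varphi$. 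If $\widehat{r}$ forces $\varphi$, I would relock $f(n)$ to the value $j$ to obtain $\widehat{r}'$ --- this is permitted by the definition of $\mathbb{P}_\omega$ precisely because $j < 3 \leq k+1 = b^p(n)$, which is where the hypothesis $b^p(n) = k+1$ is used --- and then argue that $\widehat{r}'$ still forces $\varphi$: since $p^*$ forces $\name{I}^*$ low over $\name{G}$, there is a $\Sigma^0_2(\name{G})$ formula that $p^*$ forces equivalent to $\varphi(\name{G},\name{I}^*)$, this equivalence descends to $\widehat{r},\widehat{r}' \leq p^*$ (as $n \geq |p^*|$), and the analogue of Lemma \ref{L:lockspres} transfers forcing of that $\Sigma^0_2(\name{G})$ formula, and hence of $\varphi$, from $\widehat{r}$ to $\widehat{r}'$. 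By the uniformity of the low basis theorem I would then fix names $\name{X}_0,\ldots,\name{X}_{k-1}$ and a condition $\widehat{r}'' \leq \widehat{r}'$ forcing $\bigoplus_{i<k}\name{X}_i$ low over $\name{G}$, forcing $\psi(\name{G},\name{I}^*,\name{X}_0,\ldots,\name{X}_{k-1})$, and deciding the infiniteness of each $\name{X}_i$; some $\name{X}_i$ is then forced infinite, since together they cover the infinite set $\name{I}^* \cap [u,\infty)$. Because $f^{\widehat{r}'}(n) = j$, the condition $\widehat{r}''$ forces $\name{G}_n(x) = j$ for all sufficiently large $x$, so $\psi$ gives that $\widehat{r}''$ forces $\Psi_{t_i}(\name{G} \oplus (D_i \cup F))(x) \simeq 0$ for every finite $F \subseteq \name{X}_i$ and all sufficiently large $x$. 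Hence $(\widehat{r}'',D_0,\ldots,D_{k-1},\name{X}_i)$ is a $\Qforcing_{\omega,p_\Gamma}$-extension of $(p^*,D_0^*,\ldots,D_{k-1}^*,\name{I}^*)$ showing that $\seq{i,t_i}$ ought to belong to $M$, contradicting the maximality of $M$, since $\seq{i,t_i} \notin M$ by hypothesis.

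Thus $\widehat{r}$ forces $\neg\varphi$. Instantiating the set quantifiers by the concrete partition $\name{X}_i = \name{I}^* \cap [u,\infty) \cap \name{A}_i$, $i < k$ --- a genuine $k$-part partition since $\Gamma(\name{G})$ is a stable $k$-coloring --- I would pass to some $q \leq \widehat{r}$ forcing $\neg\psi$ of this partition, and then fix $i < k$, $x > z$, and a finite set $F$ with $q$ forcing $F \subseteq \name{I}^* \cap [u,\infty) \cap \name{A}_i$, $\Psi_{t_i}(\name{G} \oplus (D_i \cup F))(x) \downarrow = 1$, and $\name{G}_n(x) = j$. Setting $E_i = D_i \cup F$, $E_{i'} = D_{i'}$ for $i' \neq i$, and $v = \max F + 1$, the condition $(q,E_0,\ldots,E_{k-1},v)$ witnesses the density claim below $p$; that it is a legitimate $\Rforcing$-condition extending $(p,D_0,\ldots,D_{k-1},u)$ is routine bookkeeping, exactly as in the proof of Lemma \ref{L:progress}.

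I do not anticipate a genuine obstacle here: the whole point of the lemma is that none of the steps in Lemma \ref{L:progress} secretly depended on there being exactly two colors, and the outline above makes clear that they do not. The one place needing actual care is, as in the original, the first case --- the passage, via lowness of $\name{I}^*$ over $\name{G}$, to an equivalent $\Sigma^0_2(\name{G})$ formula to which the locking lemma \ref{L:lockspres} applies, together with the ensuing appeal to the maximality of $M$. The only new ingredient relative to Lemma \ref{L:progress} is the numerical bound $b^p(n) = k+1$, whose sole role is to make the relocking $f(n) := j$ with $j < 3$ a legal move in $\mathbb{P}_\omega$.
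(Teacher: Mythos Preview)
Your proposal is correct and is precisely what the paper does: the paper omits the proof entirely, stating that it ``carries over from above, mutatis mutandis'' from Lemma~\ref{L:progress}, and your outline is exactly that transfer, with the $k$-fold partition in place of the $2$-fold one and the hypothesis $b^p(n)=k+1$ invoked solely to legitimize the relocking step $f(n):=j$ in $\mathbb{P}_\omega$.
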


Everything can now be put together as in the proof of Theorem \ref{T:multiple} above, to prove the theorem below, from which Theorem \ref{T:main} follows.

\begin{theorem}
	Let $\mathcal{G}$ be a hyperarithmetically generic filter on $\mathbb{P}_\omega$. Then for every $k \geq 2$ and every stable coloring $c : [\omega]^2 \to k$ hyperarithmetical in $G^\mathcal{G}$, and every finite collection of Turing functionals $\Psi_0,\ldots,\Psi_{s-1}$, there exists an infinite limit-homogeneous set $L$ for $c$ such that $\Psi_t(G^\mathcal{G} \oplus L)$ is not an infinite cohesive set for $G^\mathcal{G}$, for any $t < s$.
\end{theorem}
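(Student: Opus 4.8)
The plan is to run the proof of Theorem~\ref{T:multiple} essentially verbatim, with three systematic substitutions: $\Pforcing$, $\Qforcing_{p_\Gamma}$, and $\Rforcing_{p^*,D_0^*,D_1^*,\name{I}^*}$ are replaced by the notions of forcing $\mathbb{P}_\omega$, $\mathbb{Q}_{\omega,p_\Gamma}$, and $\Rforcing_{\omega,\ldots}$ of this section; Lemmas~\ref{L:nolow}, \ref{L:Mexists}, \ref{L:infsides}, and \ref{L:progress} are replaced by their analogues stated above; and the number $2$ of colors becomes $k$ throughout, with the ``free'' column now required to carry $k+1$ values rather than $3$. Concretely: given a stable $c\colon[\omega]^2\to k$ hyperarithmetical in $G^\mathcal{G}$ and functionals $\Psi_0,\ldots,\Psi_{s-1}$, fix a hyperarithmetical operator $\Gamma$ with $c=\Gamma(G^\mathcal{G})$. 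If $c$ has an infinite limit-homogeneous set low over $G^\mathcal{G}$, take it to be $L$ and conclude by the analogue of Lemma~\ref{L:nolow}. Otherwise, exactly as in the proof of Theorem~\ref{T:multiple}, fix $p_\Gamma\in\mathcal{G}$ forcing that $\Gamma(\name{G})$ is a stable coloring $[\omega]^2\to k$ with no infinite limit-homogeneous set low over $\name{G}$, define $\name{A}_0,\ldots,\name{A}_{k-1}$ and $\mathbb{Q}_{\omega,p_\Gamma}$, and use the analogue of Lemma~\ref{L:Mexists} to fix $p^*\in\mathcal{G}$, a $\mathbb{Q}_{\omega,p_\Gamma}$-condition $(p^*,D_0^*,\ldots,D_{k-1}^*,\name{I}^*)$, and a maximal $M\subseteq k\times s$.

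Then one splits into the two cases of Theorem~\ref{T:multiple}. If there is an $i<k$ with $\seq{i,t}\in M$ for all $t<s$, iterate the analogue of Lemma~\ref{L:infsides} to build a descending sequence of $\Rforcing_{\omega,\ldots}$-conditions with $p_z\in\mathcal{G}$ and $|D_{z+1,i}|=|D_{z,i}|+1$; then $L=\bigcup_z D_{z,i}$ is infinite and limit-homogeneous for $\Gamma(G^\mathcal{G})$, and the defining property of $M$ together with genericity forces $\Psi_t(G^\mathcal{G}\oplus L)(x)\simeq 0$ for all $t<s$ and all sufficiently large $x$, so no $\Psi_t(G^\mathcal{G}\oplus L)$ is infinite. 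Otherwise, for each $i<k$ fix $t_i<s$ with $\seq{i,t_i}\notin M$; choose $p_0\in\mathcal{G}$ extending $p^*$ with a column $n_0\in[|p^*|,|p_0|)$ satisfying $b^{p_0}(n_0)=k+1$ and $f^{p_0}(n_0)=\unlock$ (such conditions are dense below $p^*$), and build a descending sequence of $\Rforcing_{\omega,\ldots}$-conditions with $p_z\in\mathcal{G}$ that alternately (i)~grows each $D_i$ by one via the analogue of Lemma~\ref{L:infsides}, and (ii)~applies the analogue of Lemma~\ref{L:progress} at the column $n_0$, driven by a fixed surjection $h$ from the odd integers onto the set of tuples $(\seq{0,t_0},\ldots,\seq{k-1,t_{k-1}},j)$ with $\seq{i,t_i}\notin M$ for every $i<k$ and $j<k+1$, each fiber of which is infinite. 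Since $b^{p_0}(n_0)=k+1$ and $\unlock$ are preserved downward, the hypotheses of the Lemma~\ref{L:progress} analogue persist along the sequence. Put $L_i=\bigcup_z D_{z,i}$, an infinite limit-homogeneous set for $\Gamma(G^\mathcal{G})$, for each $i<k$.

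The step I expect to be the main obstacle is the final counting argument. Suppose for contradiction that for each $i<k$ the set $\Psi_{t_i}(G^\mathcal{G}\oplus L_i)$ is an infinite cohesive set for $G^\mathcal{G}$. Then, by the property of $M$ and genericity, $\seq{i,t_i}\notin M$ for every $i$, so $(\seq{0,t_0},\ldots,\seq{k-1,t_{k-1}},j)\in\ran h$ for every $j<k+1$; by construction, for each such $j$ there are infinitely many $x$ with $G^\mathcal{G}_{n_0}(x)=j$ for which $\Psi_{t_i}(G^\mathcal{G}\oplus L_i)(x)\downarrow=1$ for some $i<k$. A first pigeonhole over $i<k$, applied for each fixed $j$, followed by a second pigeonhole over the $k+1$ values $j$ distributed among $k$ classes, produces a single $i<k$ and two colors $j\neq j'$ such that $\Psi_{t_i}(G^\mathcal{G}\oplus L_i)$ takes the value $1$ at infinitely many $x$ with $G^\mathcal{G}_{n_0}(x)=j$ and at infinitely many $x$ with $G^\mathcal{G}_{n_0}(x)=j'$; hence $\Psi_{t_i}(G^\mathcal{G}\oplus L_i)$ is not cohesive for $G^\mathcal{G}$, a contradiction. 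So some $L_i$ is the desired set. It is precisely this double pigeonhole that makes $k+1$ distinct values of the free column $n_0$ necessary, which is why $\mathbb{P}_\omega$, with its per-column bound $b$, together with the requirement $b^{p_0}(n_0)=k+1$, replace the ternary columns of $\Pforcing$. Finally, Theorem~\ref{T:main} follows from the displayed statement via the reduction $\COH_\omega\sured\COH$ of Lemma~\ref{L:cohequiv}: the $2$-bounded family $R'$ obtained there from $G^\mathcal{G}$ satisfies $R'\Tred G^\mathcal{G}$, every infinite cohesive set for $R'$ is cohesive for $G^\mathcal{G}$, and any Turing functional on $R'\oplus H$ is simulated by one on $G^\mathcal{G}\oplus H$; quantifying the displayed theorem over all hyperarithmetical operators $\Gamma$ (which is what forces $\mathcal{G}$ to be hyperarithmetically generic, and hence $G^\mathcal{G}$ non-hyperarithmetical) shows that $R'$ witnesses that $\COH$ is not hyperarithmetically computably reducible to $\D^2_{<\infty}$ with finitely many functionals.
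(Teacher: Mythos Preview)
Your proposal is correct and follows essentially the same approach as the paper, which in fact does not give a detailed proof of this theorem but simply states that ``everything can now be put together as in the proof of Theorem~\ref{T:multiple}.'' You have accurately spelled out the necessary modifications: replacing $2$ by $k$ throughout, requiring the free column $n_0$ to satisfy $b^{p_0}(n_0)=k+1$ so that the final pigeonhole (distributing $k+1$ values of $j$ among $k$ indices $i$) goes through, and invoking the $\mathbb{P}_\omega$-analogues of the relevant lemmas. Your choice of the range of $h$ (tuples with $\seq{i,t_i}\notin M$ for \emph{every} $i<k$) is in fact slightly cleaner than the paper's own $\smallsetminus M^2$ in the $k=2$ case, since it exactly matches the hypothesis of the Lemma~\ref{L:progress} analogue; one minor notational point is that the $t_i$ you fix at the start of the second case serve only to witness that this range is nonempty, and the $t_i$ appearing in the contradiction assumption are freshly chosen (as in the paper), so you might consider distinguishing the two notationally.
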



\begin{thebibliography}{10}

\bibitem{BGP-TA}
Vasco Brattka, Guido Gherardi, and Arno Pauly.
\newblock Weihrauch complexity in computable analysis.
\newblock to appear.

\bibitem{BR-2017}
Vasco Brattka and Tahina Rakotoniaina.
\newblock On the uniform computational content of {R}amsey's theorem.
\newblock {\em The Journal of Symbolic Logic}, 82(4):1278--1316, 2017.

\bibitem{CDHP-TA}
Peter Cholak, Damir~D. Dzhafarov, Denis~R. Hirschfeldt, and Ludovic Patey.
\newblock Some results concerning the $\mathsf{SRT}^2_2$ vs.\ $\mathsf{COH}$
  problem.
\newblock submitted.

\bibitem{CJS-2001}
Peter~A. Cholak, Carl~G. Jockusch, and Theodore~A. Slaman.
\newblock On the strength of {R}amsey's theorem for pairs.
\newblock {\em J. Symbolic Logic}, 66(1):1--55, 2001.

\bibitem{CLY-2010}
C.~T. Chong, Steffen Lempp, and Yue Yang.
\newblock On the role of the collection principle for {$\Sigma\sp 0\sb
  2$}-formulas in second-order reverse mathematics.
\newblock {\em Proc. Amer. Math. Soc.}, 138(3):1093--1100, 2010.

\bibitem{DDHMS-2016}
Fran{\c{c}}ois~G. Dorais, Damir~D. Dzhafarov, Jeffry~L. Hirst, Joseph~R.
  Mileti, and Paul Shafer.
\newblock On uniform relationships between combinatorial problems.
\newblock {\em Trans. Amer. Math. Soc.}, 368(2):1321--1359, 2016.

\bibitem{Dzhafarov-2015}
Damir~D. Dzhafarov.
\newblock Cohesive avoidance and strong reductions.
\newblock {\em Proc. Amer. Math. Soc.}, 143(2):869--876, 2015.

\bibitem{Dzhafarov-2016}
Damir~D. Dzhafarov.
\newblock Strong reductions between combinatorial principles.
\newblock {\em J. Symbolic Logic}, 81(4):1405--1431, 2016.

\bibitem{DGHPP-TA}
Damir~D. Dzhafarov, Jun~Le Goh, Denis~R. Hirschfeldt, Ludovic Patey, and Arno
  Pauly.
\newblock {R}amsey's theorem and products in the {W}eihrauch degrees.
\newblock {\em Computability}, to appear.

\bibitem{DPSW-2017}
Damir~D. Dzhafarov, Ludovic Patey, Reed Solomon, and Linda~Brown Westrick.
\newblock Ramsey's theorem for singletons and strong computable reducibility.
\newblock {\em Proc. Amer. Math. Soc.}, 145(3):1343--1355, 2017.

\bibitem{Hirschfeldt-2014}
Denis~R. Hirschfeldt.
\newblock {\em Slicing the Truth: On the Computable and Reverse Mathematics of
  Combinatorial Principles}.
\newblock Lecture notes series / Institute for Mathematical Sciences, National
  University of Singapore. World Scientific Publishing Company Incorporated,
  2014.

\bibitem{HJ-2016}
Denis~R. Hirschfeldt and Carl~G. Jockusch, Jr.
\newblock On notions of computability-theoretic reduction between {$\Pi_2^1$}
  principles.
\newblock {\em J. Math. Log.}, 16(1):1650002, 59, 2016.

\bibitem{HM-TA}
Jeffry~L. Hirst and Carl Mummert.
\newblock Using {R}amsey's theorem once.
\newblock to appear.

\bibitem{MP-TA}
Benoit Monin and Ludovic Patey.
\newblock {$\Pi^0_1$} encodability and omniscient reductions.
\newblock {\em {N}otre {D}ame {J}ournal of {F}ormal {L}ogic}, to appear.

\bibitem{MP-TA2}
B.~Monin and L.~Patey.
\newblock {$SRT^2_2$ does not imply $COH$ in $\omega$-models}.
\newblock to appear.

\bibitem{Patey-2016c}
Ludovic Patey.
\newblock Partial orders and immunity in reverse mathematics.
\newblock In {\em Pursuit of the universal}, volume 9709 of {\em Lecture Notes
  in Comput. Sci.}, pages 353--363. Springer, [Cham], 2016.

\bibitem{Patey-2016}
Ludovic Patey.
\newblock The weakness of being cohesive, thin or free in reverse mathematics.
\newblock {\em Israel J. Math.}, 216(2):905--955, 2016.

\bibitem{Sacks-1990}
Gerald~E. Sacks.
\newblock {\em Higher recursion theory}.
\newblock Perspectives in Mathematical Logic. Springer-Verlag, Berlin, 1990.

\bibitem{Shore-2016}
Richard~A. Shore.
\newblock The {T}uring degrees: an introduction.
\newblock In {\em Forcing, iterated ultrapowers, and {T}uring degrees},
  volume~29 of {\em Lect. Notes Ser. Inst. Math. Sci. Natl. Univ. Singap.},
  pages 39--121. World Sci. Publ., Hackensack, NJ, 2016.

\bibitem{Simpson-2009}
Stephen~G. Simpson.
\newblock {\em Subsystems of second order arithmetic}.
\newblock Perspectives in Logic. Cambridge University Press, Cambridge, second
  edition, 2009.

\bibitem{Soare-2016}
Robert~I. Soare.
\newblock {\em Turing Computability: Theory and Applications}.
\newblock Springer Publishing Company, Incorporated, 1st edition, 2016.

\end{thebibliography}
\end{document}